\newtheorem{thm}{Theorem}[section]
\newtheorem{lemma}[theorem]{Lemma}
\newtheorem{proposition}[theorem]{Proposition}
\newcommand{\bea}{\begin{eqnarray*}}
\newcommand{\eea}{\end{eqnarray*}}
\newcommand{\ben}{\begin{eqnarray}}
\newcommand{\een}{\end{eqnarray}}
\newcommand{\beq}{\begin{equation}}
\newcommand{\eeq}{\end{equation}}
\newcommand{\osc}{\text{osc}}
\newcommand{\R}{\ensuremath{\mathbb{R}}}
\newcommand{\N}{\ensuremath{\mathbb{N}}}
\newcommand{\T}{\ensuremath{\mathbb{T}}}
\newcommand{\Rm}{{\mathbb R}}
\newcommand{\farc}{\frac}
\renewcommand{\d}{\partial}
\def\red{\textcolor{red}}
\begin{document}
\title{Global well-posedness for the Euler alignment system with mildly singular interactions}
\author{Jing An\footnote{Institute for Computational and Mathematical Engineering, Stanford University, Stanford, CA 94305, USA;
jingan@stanford.edu}\and Lenya Ryzhik\footnote{Department of Mathematics, Stanford University, Stanford, CA 94305, USA; ryzhik@stanford.edu}}
\maketitle
\numberbysection
\def\red{\textcolor{red}}
\begin{abstract}
We consider the Euler alignment system with mildly singular interaction kernels. When the 
local repulsion term is of the fractional type, global in time
existence of smooth solutions was proved 
in~\cite{do2018global,shvydkoy2017eulerian1,shvydkoy2017eulerian2,shvydkoy2017eulerian3}. 
Here, we consider a class of less singular interaction kernels
and establish the global regularity of solutions as long as the interaction kernels are not integrable. 
The proof relies on modulus of continuity estimates for a class of parabolic 
integro-differential equations with a drift and mildly singular kernels. 
\end{abstract}

\section{Introduction}

\subsubsection*{The Euler alignment system}

The Cucker-Smale model \cite{cucker-smale07}  
\begin{equation}\label{jan602}
\dot{x}_i=v_i,\quad \dot{v}_i=\frac{1}{N}\sum_{j=1}^N\psi(|x_i-x_j|)(v_j-v_i)
\end{equation}
describes the dynamics of a flock of $N$ individuals (birds, fish, etc.) that tend to align their velocities locally.
Here, $x_i$ and $v_i$ are  the position and the velocity of the $i$-th individual in the flock. 
The non-negative ``influence function"~$\psi(r)\ge 0$, measures the strength of the alignment and is a
decreasing function of $r$. By now, it is one of the standard models for the flocking
phenomenon -- emergence of self-organized groups (flocks) that move as a group -- 
see~\cite{carrillochoiperez,degond-dimarco-mac-wang,Vicsek2012} 
for a review. 

The Euler alignment system    
\begin{align}
\label{euler1} 
&\partial_t \rho+\partial_x(\rho u) =0\\
\label{euler2} 
&\partial_t u+ u\partial_x u = \int_{\mathbb{R}}\psi(|x-y|)[u(t,y)-u(t,x)] \rho(t,y)dy.
\end{align}
is a hydrodynamic limit of the Cucker-Smale system (\ref{jan602}), in the regime where the number~$N$ of the individuals is very large,
and they are already locally aligned,
so that their evolution may be described   in terms of a local density $\rho(t,x)$ and a local velocity $u(t,x)$. In the absence
of the local alignment, when $\psi\equiv 0$, the velocity equation (\ref{euler2}) is simply the inviscid Burgers' equation that may develop a discontinuity in
$u(t,x)$ in a finite time. In terms of the flocking dynamics, this corresponds to a collision of two flocks that can easily happen when
there is no local tendency to align.  
On the intuitive level, a positive interaction kernel~$\psi>0$ promotes a local alignment and fights the shock creation. 
However, it was shown in~\cite{carrillo2016critical,tadmor2014critical}
that if the kernel $\psi(r)$ is Lipschitz, then the solutions of the Euler alignment system may still develop a discontinuity in~$u(t,x)$ in a finite
time, though the class of the initial conditions that lead to a discontinuity is smaller than for the Burgers' equation, so even
a Lipschitz interaction kernel $\psi(r)$ has some regularizing effect. More precisely, 
if $\psi(r)$ is Lipschitz, then solutions of the
Euler alignment system remain regular if and only if the initial conditions~$u_0(x)=u(0,x)$ and $\rho_0(x)=\rho(0,x)\ge 0$ satisfy
\begin{equation}\label{dec3002}
\d_x u_0(x) \ge -(\psi\star\rho_0)(x)\hbox{ for all $x\in \R$}.
\end{equation}
Otherwise, $u(t,x)$ develops a discontinuity in a finite time. This is a natural generalization of the classical criterion for regularity of the
solutions of the  Burgers' equation with $\psi=0$.   Note that $\rho_0\ge 0$
from physical considerations.

Recently there has been an increased interest in alignment kernels $\psi(r)$ that are singular at $r\downarrow 0$, so that the local
alignment effect is  much stronger than for the Lipschitz kernels, both for the Cucker-Smale 
and the Euler alignment systems -- 
see~\cite{carrillo-choi-mucha-peszek2016,do2018global,mucha-peszek2015,peszek2014,peszek2015,shvydkoy2017eulerian1,shvydkoy2017eulerian2,shvydkoy2017eulerian3} and 
references therein. In  light of the regularity condition (\ref{dec3002}) for the Lipschitz interaction kernels, it is natural to conjecture   
that solutions of the Euler alignment system (\ref{euler1})-(\ref{euler2}) remain smooth for all times $t>0$, provided that the interaction
kernel $\psi\ge 0$ is not integrable, as then (\ref{dec3002}) holds automatically, as long as $\rho_0\not\equiv 0$.
In this direction, the global existence of smooth solutions of (\ref{euler1})-(\ref{euler2}) for singular interaction kernels  of the 
form $\psi(x) = C|x|^{-1-\alpha}$, with $\alpha\in (0,2)$ was proved 
in~\cite{do2018global,shvydkoy2017eulerian1,shvydkoy2017eulerian2,shvydkoy2017eulerian3}. We note that the particular 
scaling properties of such kernels are important for the regularity proofs, especially in~\cite{do2018global}.  We also mention that
a qualitatively similar 
nonlinearly enhanced regularizing effect happens also in nonlinear porous medium problems and Keller-Segel  equations
~\cite{bedrossian2015,bedrossian-kim2013,bedrsossian-rodriguez2014,bedrossian-rodr-berotzzi2011,caffarelli2013regularity,caffarelli2011nonlinear}. 

%

\subsubsection*{The main results}

In this paper, we consider the Euler alignment system (\ref{euler1})-(\ref{euler2}) with 
$2\pi$-periodic initial conditions $\rho(0,x)=\rho_0(x)$, $u(0,x)=u_0(x)$, such that~$\rho_0(x)\ge c_0>0$ for all
$x\in\R$, and establish the global regularity of the solutions for a general class of interaction kernels $\psi(r)$ that are not integrable but 
blow-up much slower than $r^{-1-\alpha}$ as $r\downarrow 0$. We make the following assumptions on the interaction kernel $\psi$:
 
(i) For any $\alpha>0$, $\psi(r)$ is less singular than $1/r^{1+\alpha}$ but more singular than~$1/r^{1-\alpha}$,  
so that there exists $c_\alpha>0$ such that 
\begin{equation}\label{aug1002}
\farc{1}{c_\alpha r^{1-\alpha}}\le\psi(r)\le \farc{c_\alpha}{r^{1+\alpha}}\hbox{ for all $0<r\le 1$},
\end{equation}
and $\psi(r)$ is not integrable:
\begin{equation}\label{aug108}
M (r):= \int_{r}^{\infty}\psi(y)dy\to+\infty\hbox{ as $r\to 0$.}
\end{equation}
It follows from (\ref{aug1002}) that $M(r)$ is less singular 
than $1/r^{\alpha}$ for any $\alpha>0$:
\begin{equation}\label{aug1004}
\lim_{r\to 0} r^{\alpha} M(r) = 0.
\end{equation}
(ii) The function $\psi(r)$ is symmetric, decreasing and  
satisfies the H\"{o}rmander-Mikhlin type condition: there exists~$C>0$ so that 
\begin{equation} \label{aug106}
|r\psi'(r)|\le C \psi(r),
\end{equation}
and also that a doubling condition holds:
\begin{equation}\label{doubling}
\psi(r)\le C \psi(2r)\hbox{ for all $r>0$.}
\end{equation}
(iii) We also assume that 
there exists $r_0\le 1$ such that
\begin{equation}\label{aug1006}
\hbox{the ratio $\dfrac{r\psi(r)}{M(r)}$ is non-decreasing for $0<r<r_0$},
\end{equation}
and that there exists $\gamma\in(0,1/2]$ and $r_0>0$ such that 
\begin{equation}\label{aug1008}
\hbox{$r^{\gamma} M(r)$ is non-decreasing for $0\leq r\leq r_0$}.
\end{equation}
The last assumption is almost automatic since both 
$r^\gamma M(r)>0$ for all $r>0$ and~(\ref{aug1004}) holds. It follows that $rM(r)$ is 
also non-decreasing.  Note that we do not need to assume that $m(r)=r\psi(r)$ is singular at $r=0$ 
as in \cite{do2018global} and \cite{shvydkoy2017eulerian2}. The H\"ormander-Mikhlin condition is used in the proof of
Lemma~\ref{lem:aug14}, a version of the Constantin-Vicol nonlocal maximum principle, that allows us to control the density $\rho(t,x)$
in the $L^\infty$-norm, ensuring that the dissipative term is, indeed, dissipating. 
One may reasonably say that our assumptions cover most  ``well-behaved" 
non-integrable influence
functions $\psi(r)$.  
The main result of this paper is the following theorem. 
\begin{thm}\label{thm-main}
Under the above assumptions, the Euler alignment system (\ref{euler1})-(\ref{euler2}) with periodic smooth initial conditions $(\rho_0,u_0)$
such that $\rho_0(x)\ge c_0>0$, 
has a unique global in time smooth
solution $\rho(t,x)$, $u(t,x)$. 
\end{thm}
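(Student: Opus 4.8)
The plan is to prove global regularity via the standard continuation argument: local well-posedness for smooth periodic data (with $\rho_0 \ge c_0 > 0$) follows from energy estimates and is not the issue; the crux is to rule out finite-time blow-up by establishing, on any time interval $[0,T)$ of existence, a priori bounds on $\|\rho(t,\cdot)\|_{L^\infty}$, a positive lower bound on $\rho$, and a uniform bound on $\|\partial_x u(t,\cdot)\|_{L^\infty}$ (equivalently on the ``slope'' quantity $e = \partial_x u + \psi\star\rho$). First I would recall the structural identity for the one-dimensional Euler alignment system: differentiating \eqref{euler2} and using \eqref{euler1}, the quantity $e := \partial_x u + \psi\star\rho$ satisfies a transport equation $\partial_t e + \partial_x(ue) = 0$, so $\rho$ and $e$ satisfy the same continuity equation and $e/\rho$ is transported; since $e_0 = \partial_x u_0 + \psi\star\rho_0 \ge 0$ automatically (because $\psi$ is not integrable and $\rho_0 \ge c_0$, so $\psi\star\rho_0 \equiv +\infty$ — in fact one works with the regularized problem and passes to the limit), one gets $e \ge 0$ for all time, which prevents density blow-up from below in the usual way and gives $\partial_x u \ge -\psi\star\rho$.

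Next I would derive the equation satisfied by $u$ in ``fractional-Laplacian-like'' form: rewrite the right side of \eqref{euler2} as $\int \psi(|x-y|)(u(y)-u(x))\rho(y)\,dy$, and note that when combined with the continuity equation this behaves like a parabolic integro-differential operator $\mathcal{L}_\rho u$ with a drift $u\partial_x u$, whose kernel is $\psi(|x-y|)\rho(t,y)$ — mildly singular by \eqref{aug1002} but with the crucial non-integrability \eqref{aug108} providing coercivity. The key analytic input is the modulus-of-continuity method (à la Kiselev–Nazarov–Volberg / Constantin–Vicol): one constructs a stationary modulus of continuity $\omega$, adapted to $M(r) = \int_r^\infty \psi$, and shows it is preserved by the flow. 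Concretely, one shows that if $u(t,\cdot)$ obeys $\omega$ up to some first time $t_*$ and touches it at a pair $x,\bar x$ with $|x-\bar x|=\xi$, then the dissipative contribution $\int \psi(|x-y|)[\,\cdots\,]\rho\,dy$ — bounded below using $\rho \ge c_0$ and the monotonicity/doubling assumptions \eqref{aug106}, \eqref{doubling}, \eqref{aug1006}, \eqref{aug1008} — strictly dominates the drift contribution $|u|\cdot|\omega'(\xi)|$ plus the commutator term coming from $\psi\star\rho$ being only bounded (not better); the upshot is a Lipschitz bound on $u$. For this step one needs the $L^\infty$ bound on $\rho$, supplied by Lemma~\ref{lem:aug14} (the Constantin–Vicol nonlocal maximum principle applied to the density equation, which is where the Hörmander–Mikhlin condition \eqref{aug106} enters), together with the already-established lower bound on $\rho$.

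With $\|u\|_{\mathrm{Lip}}$, $\|\rho\|_{L^\infty}$, and $0 < c \le \rho$ all controlled on $[0,T)$ uniformly in $T$, higher-order Sobolev bounds close by a routine bootstrap: commuting $\partial_x^k$ through \eqref{euler1}–\eqref{euler2}, the top-order term is controlled by the dissipation (which is now genuinely coercive of order $M(\xi)\to\infty$, beating the Burgers-type nonlinearity), and Grönwall gives $\|(\rho,u)(t)\|_{H^s}$ finite for all $t$, contradicting any finite maximal time of existence. I expect the \textbf{main obstacle} to be the modulus-of-continuity step: unlike the scale-invariant case $\psi \sim |x|^{-1-\alpha}$ treated in \cite{do2018global}, here $\psi$ has no exact scaling, so the modulus $\omega$ must be built by hand from $M$, and the competition between the weak coercivity $\sim M(\xi)$ (which may grow arbitrarily slowly, e.g.\ like $\log(1/\xi)$) and the drift term $|u|\,\omega'(\xi)$ is delicate — one must choose $\omega$ growing slowly enough near $0$ that $M(\xi)\,\omega(\xi)$ dominates $\|u\|_{L^\infty}\,\xi\,\omega'(\xi)$, using precisely \eqref{aug1006} and \eqref{aug1008} to make the breakthrough scenario impossible, and also control the lower-order contribution of $\psi\star\rho$, which is merely bounded rather than smoothing.
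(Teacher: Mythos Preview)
Your proposal has two related gaps. First, the quantity $e=\partial_x u+\psi\star\rho$ is not well-defined here: since $\psi$ is non-integrable and $\rho$ is periodic with $\rho\ge c_0$, the convolution $\psi\star\rho$ is identically $+\infty$, and no regularization-and-limit argument recovers a useful positivity statement from it. The structural identity the paper actually uses is for $G=\partial_x u-\mathcal{L}\rho$, with $\mathcal{L}$ the difference operator \eqref{aug102}; one gets that $F=G/\rho$ is transported, hence bounded in $L^\infty$, but $G$ has no sign.

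The more serious gap is the modulus-of-continuity argument applied directly to $u$. The dissipation in \eqref{euler2} is $\int\psi(|x-y|)(u(y)-u(x))\rho(t,y)\,dy$, with the density sitting \emph{inside} the integral. At a breakthrough pair $(x_0,y_0)$ with $|x_0-y_0|=\xi$, after extracting the lower bound $\rho\ge c_0$ you are left with a commutator of the form
\[
\int\psi(|z|)\,[u(x_0+z)-u(x_0)]\,[\rho(x_0+z)-\rho(y_0+z)]\,dz,
\]
which, with only $\rho\in L^\infty$ available, is $O(1)$ independently of $\xi$ (the integrand is bounded by $2\|\rho\|_\infty\,\omega(|z|)\psi(|z|)$, and $\int_0^1 z\psi(z)\,dz<\infty$ by \eqref{aug1002}). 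The favorable dissipation, by contrast, has strength governed by $M(\xi)$, which in the mildly singular regime may diverge as slowly as $\log\log(1/\xi)$; it cannot absorb an $O(1)$ error for small $\xi$. Closing this step would require a priori regularity of $\rho$ --- exactly what the argument is supposed to produce. In the fractional case $\psi(r)=r^{-1-\alpha}$ this commutator is harmless because the dissipation has strength $\xi^{-\alpha}\to\infty$; that mechanism is unavailable here, which is why the scaling-based proofs of \cite{do2018global} do not transfer.

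The paper avoids this by working with $\rho$ rather than $u$. In the density equation \eqref{main} the dissipation is $\rho(t,x)\mathcal{L}\rho$, with the coefficient \emph{outside} the integral, so one has a clean integro-differential operator with kernel $k(x,z,t)=\rho(t,x)\psi(|z|)$ obeying two-sided bounds \eqref{aug604}. The drift $u$ is then only $M$-Lipschitz (see \eqref{uest}), which already rules out a standard KNV construction; instead the paper runs a De~Giorgi oscillation-decay argument on $M$-parabolic cylinders (Proposition~\ref{holderthm}, Lemmas~\ref{lem-nov1504}--\ref{pointest}) to obtain the $M$-H\"older estimate of Theorem~\ref{thm-holder} for $\rho$. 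Only with that H\"older bound in hand does a nonlinear maximum principle --- applied to $\rho_x$, not to $u$ (Proposition~\ref{prop-aug1304}) --- close: the H\"older estimate on $\rho$ is precisely what controls the dangerous term $\mathcal{L}\rho$ appearing in the equation for $\rho_x$. Your bootstrap to higher derivatives is fine once $\|\rho_x\|_{L^\infty}$ is bounded, but the route there must go through regularity of $\rho$, not a direct Lipschitz bound on $u$.
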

The strict positivity of the density is needed for ``unconditional" regularity: if there are regions such that $\rho_0(x)=0$
then a Burgers'-like mechanism may lead to blow up even for fractional-type influence kernels~\cite{tan2017}.
Let us also mention that when the influence kernel is integrable, the finite time blow-up scenario for Lipschitz influence kernels
in \cite{carrillo2016critical} still applies, even without the 
assumption that the influence function $\psi$ is Lipschitz. Indeed, since $\psi$ only shows up in the convolutions, and the quantities 
$\rho$ and $\d_x \rho$ that $\psi$ convolves with, by proof of contradiction, are assumed to stay bounded, the proof applies
for $\psi \in L^1(\Rm)$ as well. In that sense, Theorem~\ref{thm-main} is reasonably sharp, except for our assumptions above that $\psi$
is not just non-integrable but also ``nicely-behaved".  

In order to explain the proof of Theorem~\ref{thm-main}, we recall that 
the Euler alignment system~(\ref{euler1})-(\ref{euler2}) can be reformulated as
\begin{equation}\label{main2}
    \d_t \rho + u\d_x \rho+ \rho \mathcal{L}\rho = -G\rho,
\end{equation}
with $G:= \d_x u - \mathcal{L}\rho$, and the operator ${\cal L}$ given by
\begin{equation}\label{aug102}
\mathcal{L}f(x):=\int_{\mathbb{R}}\psi(x-y)(f(x)-f(y))dy.
\end{equation} 
As in~\cite{do2018global,shvydkoy2017eulerian1,shvydkoy2017eulerian2,shvydkoy2017eulerian3}, 
one may show that both the density $\rho$ and the function $G$ are uniformly bounded. Thus, (\ref{main2}) may be thought of 
as an integro-differential equation for $\rho(t,x)$ of the form
\begin{equation}\label{fullholder}
\d_t q + v(t,x)\cdot\nabla q + \mathscr{L} q = f(t,x),
\end{equation}
with a bounded function $f(t,x)$ and an operator $\mathscr L$ of the form
\begin{equation}\label{jan202}
\mathscr{L} q(t,x) = \int_{\mathbb{R}^d} (q(t,x)-q(t,x+z)) k(x,z,t) dz,
\end{equation}
with a kernel $k(x,z,t)$ that obeys bounds similar to $\psi$, when considered as a function of~$z$.
When~$k(x,z,t) = C|z|^{-d-\alpha}, \alpha\in(0,2)$, the operator $\mathscr{L}$ is the standard fractional Laplacian, 
and H\"{o}lder estimates for such time dependent fractional diffusion equations with a drift have been obtained 
in \cite{silvestre2012holder} using purely analytic techniques. For more general kernels, closer to our assumptions, elliptic estimates
in the absence of a drift are provided in~\cite{kassmann2017intrinsic}
applying a combination of anlytical and probabilistic methods. These estimates were extended in \cite{chen2014holder} to 
time dependent equations with a drift, using a purely probabilistic approach. Both~\cite{chen2014holder} and \cite{kassmann2017intrinsic} assume that $m(x) = |x|\psi(x)$ varies regularly at zero with index $\alpha \in \mathbb{R}$, in the sense that for every $\lambda>0$,
\begin{equation}\label{aug1040}
\lim_{r\to 0} \frac{m( r)}{m(\lambda r)} = \lambda^{\alpha},
\end{equation}
and rely on several properties derived from this assumption.  Here, we present an alternative analytical approach to 
the H\"{o}lder estimates for the parabolic equations with a drift, and a weaker than fractional dissipation, 
based on combining the methods in~\cite{kassmann2017intrinsic} with a version of the quantitative comparison principle 
in~\cite{silvestre2012holder}. This allows us to relax (\ref{aug1040}) to 
assumptions~(\ref{aug1002}), (\ref{doubling}) and (\ref{aug1006})-(\ref{aug1008}),
and obtain the following H\"older regularity estimate for the solutions to the Euler alignment system, that leads to Theorem~\ref{thm-main}. 
\begin{thm}\label{thm-holder}
Suppose the above assumptions (\ref{aug1002}), (\ref{aug108}) and (\ref{doubling})-(\ref{aug1008}) hold, and let $\rho(t,x)$ be 
a solution to the Euler alignment system (\ref{euler1})-(\ref{euler2}).
There exists $\beta \in (0,1)$, $r_0>0$ and a sufficiently small constant $c'>0$ such that for any $0<t\leq 1$ 
and $|x-y| \leq \min(r_0,M^{-1}(c'/t))$, we have
\begin{align}\label{thm-holder-result}
|\rho(t,x) - \rho(t,y)|\le K_0 t^{-{\beta}} [M(|x-y|)]^{-\beta},
\end{align}
with a constant $K_0$ that depends only on the initial conditions $\rho_0$ and $u_0$. 
\end{thm}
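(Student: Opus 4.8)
The plan is to prove the Hölder-type bound by the method of moduli of continuity adapted to the operator $\mathcal L$, following the Kiselev–Nazarov–Volberg / Silvestre philosophy but with a modulus dictated by $M$ rather than by a power of $|x-y|$. First I would record the structural facts we are allowed to assume: by the arguments referenced after~(\ref{main2}), the density $\rho(t,x)$ solves~(\ref{fullholder})–(\ref{jan202}) with $v = u$ bounded (in fact $\|u\|_{L^\infty}$ and $\|\partial_x u\|_{L^\infty}$ are controlled, and $\|G\|_{L^\infty} \le C$, so $f = -G\rho$ is bounded), and that $0 < c_0 \le \rho(t,x) \le \|\rho\|_{L^\infty}$ uniformly, the upper bound coming from Lemma~\ref{lem:aug14}. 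The kernel $k(x,z,t)$ of $\mathscr L = \mathcal L$ is simply $\psi(|z|)$, so it enjoys~(\ref{aug1002}), the doubling property~(\ref{doubling}) and the Hörmander–Mikhlin bound~(\ref{aug106}). The target is to show that for a suitable $\beta\in(0,1)$ and small $c'$, the function $\rho(t,\cdot)$ obeys, for each $t\le 1$, the modulus of continuity
\[
\omega_t(\xi) := K_0\, t^{-\beta}\,[M(\xi)]^{-\beta}, \qquad 0<\xi\le \min\bigl(r_0, M^{-1}(c'/t)\bigr),
\]
extended in a fixed way (e.g. constant) for larger $\xi$; note $M^{-1}$ makes sense because $M$ is continuous and strictly decreasing near $0$ by~(\ref{aug108})–(\ref{aug1004}).

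The core is a breakthrough argument. Suppose toward contradiction that at some first time $t_*>0$ and some pair $x\neq y$ the inequality~(\ref{thm-holder-result}) is violated; by continuity we may assume equality holds there, i.e. $\rho(t_*,x)-\rho(t_*,y) = \omega_{t_*}(\xi)$ with $\xi = |x-y|$, and that the modulus is still obeyed for all smaller times. One then computes $\partial_t\bigl[\rho(t,x)-\rho(t,y)\bigr]$ at $(t_*,x,y)$ from~(\ref{fullholder}) and shows it is strictly less than $\partial_t\omega_t(\xi)\big|_{t=t_*}$, the desired contradiction. There are three contributions to estimate. (a) The drift term $u\partial_x\rho$: at the extremal configuration, the standard trick gives $|u(x)-u(y)|\,|\text{(difference of gradients)}|\le 2\|u\|_{L^\infty}\,\omega_{t_*}'(\xi)$, which is absorbed by making $K_0$ large relative to $\|u\|_{L^\infty}$, exactly as in~\cite{silvestre2012holder}; here $\omega_t'(\xi) = -\beta K_0 t^{-\beta} [M(\xi)]^{-\beta-1} M'(\xi)$ and $M'(\xi)=-\psi(\xi)$. (b) The forcing term contributes at most $2\|f\|_{L^\infty} = 2\|G\rho\|_{L^\infty} \le C$, a constant. (c) The dissipative term $-\mathcal L\rho(x)+\mathcal L\rho(y)$ is the engine: using that $\rho(t_*,\cdot)$ lies under the concave-in-the-right-variable modulus $\omega_{t_*}$ around both $x$ and $y$, one bounds it above by a negative quantity of the form $-c\,D(\xi)$, where
\[
D(\xi) \;\gtrsim\; \int_{\xi/2}^{1}\frac{\omega_{t_*}(\xi)-\omega_{t_*}(\xi - 2\eta)^+ \;-\;\bigl(\omega_{t_*}(\xi+2\eta)-\omega_{t_*}(\xi)\bigr)}{\,\cdot\,}\,\psi(\eta)\,d\eta \;+\;\text{(far-field gain)},
\]
and the far-field part, using non-integrability~(\ref{aug108}) together with $\rho\le\|\rho\|_{L^\infty}$, produces a term comparable to $M(\xi)\,\bigl(\omega_{t_*}(\xi)-\|\rho\|_{L^\infty}\bigr)$-type quantity; more precisely, following the Kassmann–Mimica intrinsic-scaling computation in~\cite{kassmann2017intrinsic} one gets the crucial lower bound
\[
-\mathcal L\rho(t_*,x)+\mathcal L\rho(t_*,y) \;\le\; -c\,\beta\,M(\xi)\,\omega_{t_*}(\xi) \;+\; C\,\omega_{t_*}(\xi/2).
\]

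The final step is the arithmetic check that the dissipative gain beats everything. Since $\omega_t(\xi) = K_0 t^{-\beta} M(\xi)^{-\beta}$, we have $\partial_t \omega_t(\xi) = -\beta\,t^{-1}\,\omega_t(\xi)$, so the contradiction we need is
\[
-\beta\,t_*^{-1}\,\omega_{t_*}(\xi) \;>\; \underbrace{2\|u\|_{L^\infty}|\omega_{t_*}'(\xi)|}_{\text{(a)}} \;+\; \underbrace{C}_{\text{(b)}} \;-\;\underbrace{c\,\beta\,M(\xi)\,\omega_{t_*}(\xi)}_{\text{(c), main}} \;+\; \underbrace{C\,\omega_{t_*}(\xi/2)}_{\text{(c), error}},
\]
i.e. it suffices that $c\,\beta\,M(\xi)\,\omega_{t_*}(\xi)$ dominate the right side. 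By the doubling property~(\ref{doubling}) and monotonicity of $M$, $\omega_{t_*}(\xi/2)\le C^\beta\,\omega_{t_*}(\xi)$, so the error in (c) is a bounded multiple of $\omega_{t_*}(\xi)$ and is beaten by the main term once $\xi$ is small, i.e. once $M(\xi)$ is large, which is guaranteed because the restriction $\xi\le M^{-1}(c'/t_*)$ forces $M(\xi)\ge c'/t_*$, hence $\beta M(\xi)\ge \beta c'/t_*$ — and this is precisely the scale on which $-\beta t_*^{-1}\omega_{t_*}(\xi)$ lives, so choosing $c'$ appropriately (relative to $c$) closes the loop for the $\omega_{t_*}$-proportional terms. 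For the drift term (a), $|\omega_{t_*}'(\xi)| = \beta\,\omega_{t_*}(\xi)\,\psi(\xi)/M(\xi)$, and by~(\ref{aug1006}) the ratio $\xi\psi(\xi)/M(\xi)$ is bounded near $0$, so (a) $\lesssim \beta\,\omega_{t_*}(\xi)/\xi$; one then needs $M(\xi)\gtrsim 1/\xi$ up to constants — which is exactly the content of~(\ref{aug1002}) lower bound integrated, giving $M(\xi)\gtrsim \xi^{-\alpha}\cdot$const, enough since we only need to beat $\xi^{-1}$ after noting $\alpha$ may be taken $\ge$ any threshold — wait, here one must be a bit careful and instead absorb (a) into (c) by largeness of the implied constant in $D(\xi)$ as in~\cite{silvestre2012holder}; the constant term (b) is similarly harmless after $\xi$ is small. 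The main obstacle, and the place where the real work lies, is step (c): establishing the intrinsic lower bound $-\mathcal L\rho(x)+\mathcal L\rho(y)\le -c\beta M(\xi)\omega_{t_*}(\xi) + C\omega_{t_*}(\xi/2)$ for the non-power modulus $M(\xi)^{-\beta}$, which requires carefully splitting the $z$-integral into near ($|z|\lesssim\xi$), intermediate, and far ($|z|\gtrsim 1$) regimes, using concavity of $s\mapsto \omega_{t_*}(s)$ where it helps, the Hörmander–Mikhlin bound~(\ref{aug106}) to control the singular near-field, and the non-integrability~(\ref{aug108}) plus the uniform bound on $\rho$ for the far-field gain — essentially porting the estimates of~\cite{kassmann2017intrinsic} from the elliptic/no-drift setting to our parabolic one, which is where combining their method with the quantitative comparison principle of~\cite{silvestre2012holder} becomes essential.
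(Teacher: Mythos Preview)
Your proposal takes a genuinely different route from the paper: you attempt a Kiselev--Nazarov--Volberg breakthrough argument with the explicit modulus $\omega_t(\xi)=K_0 t^{-\beta}[M(\xi)]^{-\beta}$, whereas the paper runs a De~Giorgi oscillation-decay iteration on intrinsic $M$-parabolic cylinders (Lemmas~\ref{lem-nov1504}--\ref{pointest}), in the spirit of Silvestre combined with Kassmann--Mimica, and then passes to the inhomogeneous equation by Duhamel.

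There is, however, a real gap in your argument, centered on the drift. You assert that $\|\partial_x u\|_{L^\infty}$ is controlled, but at this stage it is not: $\partial_x u = G + \mathcal L\rho$, and $\mathcal L\rho$ is only bounded \emph{after} one has the H\"older regularity you are trying to prove (see~(\ref{dec523}), which appears in the bootstrap). What is available is that $u$ is $M$-Lipschitz, $|u(x)-u(y)|\le C'|x-y|M(|x-y|)$ (this is exactly (\ref{uest}), verified in Section~3.2). Plugging this into your drift term (a) gives
\[
|u(x)-u(y)|\,\omega_{t_*}'(\xi)\;\le\; C'\xi M(\xi)\cdot \beta\,\omega_{t_*}(\xi)\,\frac{\psi(\xi)}{M(\xi)}\;=\;C'\beta\,\xi\psi(\xi)\,\omega_{t_*}(\xi)\;\le\;C'\gamma_0\,\beta\,M(\xi)\,\omega_{t_*}(\xi),
\]
using $\xi\psi(\xi)\le\gamma_0 M(\xi)$ from~(\ref{aug1006}). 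This is \emph{the same order} as your claimed dissipation gain $c\beta M(\xi)\omega_{t_*}(\xi)$, with no small parameter left to close: $C'$ depends on the $M$-Lipschitz constant of $u$ while $c$ depends on the lower bound $c_0$ of $\rho$, and there is no a~priori inequality between them. Your own ``wait, here one must be a bit careful'' is exactly this obstruction, and invoking ``largeness of the implied constant in $D(\xi)$'' does not resolve it. A secondary but related point: the dissipative operator in~(\ref{main}) is $\rho\,\mathcal L$, not $\mathcal L$, so the kernel is $k(x,z,t)=\rho(t,x)\psi(|z|)$, which is $x$-dependent; the paper absorbs this into the two-sided bound~(\ref{aug604}), but in a KNV argument at two points $x,y$ the mismatch $\rho(x)\mathcal L\rho(x)-\rho(y)\mathcal L\rho(y)$ produces an extra term $(\rho(x)-\rho(y))\mathcal L\rho(y)=\omega_{t_*}(\xi)\,\mathcal L\rho(y)$ that you would also need to control.

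The paper circumvents precisely this drift--dissipation balance by its barrier construction: in Lemma~\ref{pointest} the cutoff $\varrho_r$ is transported at speed $C_v\,rM_\eta(r)$ with $C_v$ chosen large relative to the drift constant $C'$ (see~(\ref{nov2612})), so the advection is absorbed by the barrier motion rather than pitted directly against the nonlocal dissipation. This decoupling is what your KNV scheme lacks, and it is the essential reason the De~Giorgi route is used here.
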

Let us note that, compared to \cite{silvestre2012holder}, we need to work with the advection $u(t,x)$ that is not Lipschitz but only $M$-Lipschitz
in space: there exists $C>0$ such that 
\[
|u(t,x)-u(t,y)|\le C|x-y|M(|x-y|),\hbox{ for $|x-y|\le 1$. }
\]
This is similar to log-Lipschitz velocities in the Yudovich theory for the Euler equation. 
%
   
A word on notation: we note by $C>0$ universal constants that may change from line to line. For important constants, we denote as $c',C', C_1,$ etc. to distinguish them. For higher order derivatives in $x$, we use $(n)$ to denote, for example, $\rho^{(n)} (t,x) = \d_x^n \rho(t,x)$.
The torus we use here is $\T=[-\pi,\pi]$. 

{\bf Acknowledgement.} JA was supported by the Oliger Memorial Graduate Fellowship, and LR by an NSF grant DMS-1613603. 

\section{Preliminaries}

In this section, we establish some preliminary results for the proof of Theorem~\ref{thm-main}. 

\subsection{A reformulation of the Euler alignment system}

Let us first recall a convenient reformulation of the Euler alignment system.
Applying the operator $\mathcal{L}$ to \eqref{euler1} gives:
\begin{align}
\label{reform1}0&= \partial_t(\mathcal{L}\rho)+\partial_x(\mathcal{L}(\rho u)) =\partial_t(\mathcal{L}\rho)
+\partial_x\int_{\mathbb{R}}\psi(x-y)[\rho(x)u(x)-\rho(y)u(y)] dy\\
\nonumber &=\partial_t(\mathcal{L}\rho)+\partial_x\bigg(\int_{\mathbb{R}}\psi(x-y)[u(x)-u(y)]\rho(y)dy\bigg)+\partial_x\bigg(u(x)\int_{\mathbb{R}}\psi(x-y)[\rho(x)-\rho(y)] dy\bigg)\\
\nonumber &=\partial_t(\mathcal{L}\rho)+\partial_x\bigg(\int_{\mathbb{R}}\psi(x-y)[u(x)-u(y)]\rho(y)dy\bigg)+\partial_x(u(x)\mathcal{L}\rho(x)).
\end{align}
Next, we apply $\partial_x$ to (\ref{euler2}) to get
\begin{equation} \label{reform2}
\partial_t(\partial_x u)+\partial_x(u\partial_x u)=\partial_x\bigg(\int_{\mathbb{R}}\psi(x-y)[u(y)-u(x)]\rho(y)dy\bigg).
\end{equation}
Thus, if we set
\begin{equation} \label{fullSys3}
G(t,x):=\partial_x u(t,x)-\mathcal{L}\rho(t,x),
\end{equation}
then, subtracting (\ref{reform1}) from (\ref{reform2}), the Euler alignment system (\ref{euler1})-(\ref{euler2}) 
can be recast into a system of equations for $\rho$ and $G$
\begin{align}
\label{fullSys1}&\partial_t \rho+\partial_x(\rho u) =0\\
\label{fullSys2}&\partial_t G+\partial_x (G u) =0.
\end{align}
The velocity field $u$ can be recovered from (\ref{fullSys3}) up to a constant. In order to find the constant,
note that the averages of $\rho$ and $G$ over $\T$ are preserved in time by (\ref{fullSys1})-(\ref{fullSys2}):
\begin{equation}\label{nov2913}
\kappa := \frac{1}{2\pi}\int_{-\pi}^{\pi} \rho(t,x) dx ~~\hbox{ and }~~\nu := \frac{1}{2\pi}\int_{-\pi}^{\pi} G(t,x) dx.
\end{equation}
Therefore, the functions
\begin{equation}\label{nov2914}
\theta(t,x) := \rho(t,x)-\kappa ~~\hbox{ and }~~\tilde{\theta}(t,x) =G(t,x)-\nu
\end{equation}
have periodic mean-zero primitive functions 
$\Phi$ and $\Psi$, respectively: 
\begin{equation}\label{nov2916}
\theta(t,x) = \d_x \Phi(t,x) ~~\hbox{ and }~~\tilde{\theta}(t,x) =\d_x \Psi(t,x).
\end{equation}
Then, $u$ can be written as 
\begin{equation}\label{udecomp}
u(t,x) = \mathcal{L} \Phi(t,x) + \Psi(t,x) + I_0(t).
\end{equation}
As in \cite{do2018global}, we find that
\begin{equation}\label{nov2917}
I_0(t) = \frac{1}{2\pi\kappa} \bigg[ \int_{-\pi}^{\pi} \rho_0(x) u_0(x) dx - \int_{-\pi}^{\pi} \rho(t,x) \Psi(t,x) dx\bigg].
\end{equation}
Note that the function $F=G/\rho$ satisfies
\begin{equation}\label{aug116}
\d_tF+u\d_xF=0,
\end{equation}
whence 
\begin{equation}\label{aug118}
\|F(t,\cdot)\|_{L^{\infty}}\le \|F_0\|_{L^{\infty}}.
\end{equation}
Combining (\ref{fullSys3}) and (\ref{fullSys1}), we have the following  equation for the density
\begin{equation}\label{main}
\d_t \rho + u\d_x \rho+ \rho \mathcal{L}\rho = -G\rho.
\end{equation}
This equation will be the starting point for our analysis below. 

\subsection{Some properties of the influence kernel}

Here, we prove some basic properties of the influence kernel that follow from our assumptions on $\psi(r)$. 
  
%
%
%
%
\begin{lemma}\label{claimdoub}
The function $M(r)$ also satisfies the doubling condition: there exists $C>0$ so that
\begin{equation}\label{aug1010}
M(r)\le CM(2r)\hbox{ for all {$r>0$}.}
\end{equation}
\end{lemma}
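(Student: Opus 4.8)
The plan is to reduce the doubling property of $M$ entirely to the doubling property of $\psi$ itself, together with the monotonicity of $\psi$. First I would split off the innermost dyadic annulus:
\[
M(r) = \int_r^{2r}\psi(y)\,dy + \int_{2r}^\infty \psi(y)\,dy = \int_r^{2r}\psi(y)\,dy + M(2r),
\]
so that it suffices to control $\int_r^{2r}\psi(y)\,dy$ by a fixed multiple of $M(2r)$. (If $M(2r)=+\infty$ the claim is trivial; otherwise $M(r)<\infty$ as well, by the decay assumptions on $\psi$, so the subtraction is legitimate.)

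Next, since $\psi$ is decreasing, on the one hand $\int_r^{2r}\psi(y)\,dy \le r\,\psi(r)$, and on the other hand
\[
M(2r) = \int_{2r}^\infty \psi(y)\,dy \ge \int_{2r}^{4r}\psi(y)\,dy \ge 2r\,\psi(4r).
\]
Applying the doubling condition (\ref{doubling}) twice gives $\psi(r)\le C\psi(2r)\le C^2\psi(4r)$, hence
\[
\int_r^{2r}\psi(y)\,dy \;\le\; r\,\psi(r) \;\le\; C^2 r\,\psi(4r) \;\le\; \frac{C^2}{2}\,M(2r).
\]
Combining this with the splitting above yields $M(r)\le \bigl(1+\tfrac{C^2}{2}\bigr)M(2r)$, which is exactly (\ref{aug1010}) with constant $1+C^2/2$, uniformly in $r>0$.

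There is no genuine obstacle here; the only points worth a word of care are that the doubling constant for $M$ comes out (roughly) as the square of the one for $\psi$, because we must jump two dyadic scales to compare $\psi(r)$ with a quantity already sitting inside $M(2r)$, and that one should note $M(r)<\infty$ for every $r>0$ so the identity $M(r)-M(2r)=\int_r^{2r}\psi$ makes sense. One could alternatively avoid invoking (\ref{doubling}) and instead use the two-sided power bounds (\ref{aug1002}) for small $r$ plus a trivial separate argument for $r$ bounded away from $0$, but the route above is cleaner and handles all $r>0$ at once.
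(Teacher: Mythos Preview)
Your argument is correct, but the paper takes a shorter route. Instead of splitting off the inner annulus and comparing $\int_r^{2r}\psi$ against $\int_{2r}^{4r}\psi$, the paper simply changes variables $y=2x$ in the definition of $M(r)$:
\[
M(r)=\int_r^\infty\psi(x)\,dx=\tfrac12\int_{2r}^\infty\psi(y/2)\,dy\le\tfrac{C}{2}\int_{2r}^\infty\psi(y)\,dy=\tfrac{C}{2}M(2r),
\]
invoking the doubling condition (\ref{doubling}) just once in the form $\psi(y/2)\le C\psi(y)$. This yields the result in a single line with the better constant $C/2$, whereas your annulus comparison needs two applications of doubling (to pass from $\psi(r)$ to $\psi(4r)$) and produces $1+C^2/2$. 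Your approach has the minor conceptual advantage of making explicit where the ``extra mass'' of $M(r)$ over $M(2r)$ sits, but the change-of-variables trick is the cleaner way to transfer a doubling condition from an integrand to its tail integral.
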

\begin{proof}
This is easily seen from a change of variables, using the doubling condition (\ref{doubling}) on the function $\psi(r)$: 
\begin{align*}
M(r) = \int_r^{\infty} \psi(x) dx = \frac{1}{2} \int_{2r}^{\infty} \psi(y/2) dy\leq \frac{C}{2}\int_{2r}^{\infty} \psi(y)dy = \frac{C}{2} M(2r).
\end{align*}
\end{proof}
\begin{lemma}\label{claimpow}
There exist $C_1$ and $C_2$ so that for all $k>1$, we have\begin{align}\label{power}
M(r^k)\leq C_1C_2^k [M(r)]^k~~\hbox{ for all $0<r<r_0$,}
\end{align}
with $r_0$ as in (\ref{aug1006}).
\end{lemma}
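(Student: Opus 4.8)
The plan is to read the monotonicity hypothesis (\ref{aug1006}) as a statement about the logarithmic derivative of $M$ and to exploit the sublinear growth it forces. Set $\Lambda(r):=r\psi(r)/M(r)$, so that (\ref{aug1006}) says precisely that $\Lambda$ is non-decreasing on $(0,r_0)$. Since $\psi$ is continuous and positive and $M(r)=\int_r^\infty\psi$ is $C^1$ and positive on $(0,\infty)$ with $M'=-\psi$, we have $(\log M)'(s)=-\psi(s)/M(s)=-\Lambda(s)/s$, hence for any $0<a<b\le r_0$
\[
\log\frac{M(a)}{M(b)}=\int_a^b\frac{\Lambda(s)}{s}\,ds .
\]
Because $\Lambda$ is monotone on $(0,r_0)$, this identity immediately gives the two-sided estimate $\Lambda(a)\log(b/a)\le\log(M(a)/M(b))\le\Lambda(b)\log(b/a)$; moreover $\Lambda$ is bounded on $(0,r_0)$ by $\Lambda_0:=\lim_{s\uparrow r_0}\Lambda(s)<\infty$, the finiteness being clear since $\psi$ is only singular at the origin while $M(r_0)>0$.

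First I would apply the upper estimate with $a=r^k$ and $b=r$, which is legitimate because $r<r_0\le1$ and $k>1$ force $0<r^k<r<r_0$; this gives
\[
\log M(r^k)-\log M(r)\le(k-1)\,\Lambda(r)\,\log\tfrac{1}{r}.
\]
Then I would apply the lower estimate with $a=r$ and $b=r_0$ to bound the factor $\Lambda(r)\log(1/r)$ on the right:
\[
\log M(r)-\log M(r_0)\ \ge\ \Lambda(r)\,\log\tfrac{r_0}{r}\ =\ \Lambda(r)\log\tfrac{1}{r}-\Lambda(r)\log\tfrac{1}{r_0},
\]
so that, using $\Lambda(r)\le\Lambda_0$ and $\log(1/r_0)\ge0$ (since $r_0\le1$),
\[
\Lambda(r)\log\tfrac{1}{r}\ \le\ \log M(r)-\log M(r_0)+\Lambda_0\log\tfrac{1}{r_0}.
\]
Inserting the last display into the previous one yields $\log M(r^k)\le k\log M(r)+(k-1)B$ with $B:=\Lambda_0\log(1/r_0)-\log M(r_0)$, and exponentiating gives $M(r^k)\le e^{(k-1)B}M(r)^k$. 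One may therefore take $C_1:=1$ and $C_2:=e^{\max(B,0)}$: when $B\ge0$ this reads $e^{(k-1)B}\le e^{kB}=C_2^k$, and when $B<0$ one has $e^{(k-1)B}\le1=C_2^k$.

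The only conceptual step is the first paragraph: recognizing that (\ref{aug1006}) is equivalent to the concavity of $t\mapsto\log M(e^{-t})$, which is exactly what makes $\log M(r^k)$ grow at most linearly in $k$; the rest is bookkeeping of constants. It is worth stressing that the weaker structural facts at hand do not suffice — for instance (\ref{aug1008}), or the non-decrease of $rM(r)$, only give $M(r^k)\le r^{-\gamma(k-1)}M(r)$, and since $\log M(r)=o(\log(1/r))$ by (\ref{aug1004}) the factor $r^{-\gamma(k-1)}$ cannot be absorbed into $C_1C_2^kM(r)^{k-1}$. The remaining points (that $M$ is smooth and positive, so $\log M$ is the integral of $-\psi/M$, and that $\Lambda_0<\infty$) are immediate from the hypotheses and carry no real difficulty.
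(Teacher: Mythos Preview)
Your proof is correct and follows essentially the same idea as the paper's: both arguments exploit that assumption (\ref{aug1006}) is precisely the concavity of $t\mapsto\log M(e^{-t})$, and both extract the bound $\log M(r^k)\le k\log M(r)+(k-1)B$ from it. The paper makes the substitution $p(y)=\log M(e^{-y})$ and uses the secant inequality for concave functions, then changes variables back and invokes the doubling property (\ref{aug1010}) to absorb a leftover factor $r_0$; your integral formulation in the original variable sidesteps that last step and gives the constants directly, which is a mild simplification but not a different route.
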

\begin{proof}
Let us define 
\[
p(y) := \log [M(e^{-y})].
\] 
There exists $y_0\ge 0$ so that the function $p(y)$ is increasing and concave for $y\ge y_0$ because 
\[
p'(y) = -\frac{M'(e^{-y})e^{-y}}{M(e^{-y})} = \frac{\psi(e^{-y})e^{-y}}{M(e^{-y})}\ge 0,
\]
and 
\[
p''(y) = -\Big( \frac{\psi(e^{-y})e^{-y}}{M(e^{-y})} \Big)' e^{-y}\leq 0,~~\hbox{ for $y\ge y_0=\log r_0^{-1}$},
\]
due to assumption (\ref{aug1006}). It follows that the function 
\[
\frac{p(y)-p(y_0)}{y-y_0}
\]
is strictly decreasing for $y\ge y_0$. Hence, for all $k>1$ we have 
\begin{equation}\label{aug1050}
\frac{p(y_0+k(y-y_0))-p(y_0)}{k(y-y_0)}\le \frac{p(y_0+y-y_0)-p(y_0)}{y-y_0},
\end{equation}
so that
\begin{equation}\label{aug1052}
p(y_0+k(y-y_0))\le kp(y)+p(y_0)-kp(y_0).
\end{equation}
Going back to the function $M$, this says
\begin{equation}\label{aug1054}
M(e^{-y_0-k(y-y_0)})\le C^{1-k}[M(e^{-y})]^k,~~C=e^{p(y_0)}=M(e^{-y_0})=M(r_0),
\end{equation}
which is
\begin{equation}\label{aug1054bis}
M\Big(\farc{r^k}{r_0^{k-1}}\Big)\le \farc{1}{[M(r_0)]^{k-1}}[M(r)]^k,~~0<r<r_0,
\end{equation}
or
\begin{equation}\label{aug1056}
M({x^k})\le \farc{1}{[M(r_0)]^{k-1}}[M(r_0^{1-1/k}x)]^k\le  \farc{1}{[M(r_0)]^{k-1}}[M(r_0x)]^k\le 
\farc{C_0^k}{[M(r_0)]^{k-1}}[M(x)]^k, 
\end{equation}
for all $0<x<r_0$.
We used the doubling property in the last inequality above.
\end{proof}

\subsection{A pointwise bound on the density }

We first obtain uniform bounds on the density $\rho(t,x)$.  
\begin{proposition}\label{lem:aug1-02} 
There exist $c_0>0$ and $C_0<+\infty$ that depend only on the initial conditions~$u_0(x)$ and $\rho_0(x)$ so that
\begin{equation}
0<c_0\leq \rho(t,x)\leq C_0, \,\,\, x\in \mathbb{T}, t\geq 0.
\end{equation}
\end{proposition}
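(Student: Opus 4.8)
The plan is to obtain the upper and lower bounds separately, using the transport structure of the reformulated system together with a nonlocal maximum principle for the dissipative term $\rho\mathcal{L}\rho$.

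\textbf{Lower bound.} I would start from the equation $\d_t\rho + u\d_x\rho + \rho\mathcal{L}\rho = -G\rho$ and rewrite it along characteristics $\dot X(t) = u(t,X(t))$. At a point where $\rho(t,\cdot)$ attains its spatial minimum we have $\mathcal{L}\rho(t,x_{\min})\le 0$ (since $\mathcal{L}\rho(x) = \int \psi(x-y)(\rho(x)-\rho(y))\,dy \le 0$ there), so that $\tfrac{d}{dt}\rho_{\min} \ge -\rho_{\min}\mathcal{L}\rho(x_{\min}) - G\rho_{\min} \ge -\|G/\rho\|_{L^\infty}\rho_{\min}\cdot\rho_{\min}$? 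That is not quite linear, so instead I would use $F=G/\rho$ directly: from $\d_tF + u\d_xF = 0$ we have $\|F(t)\|_{L^\infty}\le\|F_0\|_{L^\infty}=:C_F$, hence $|G\rho|\le C_F\rho^2$. A cleaner route is to note that the minimum of $\rho$ satisfies, in the sense of the Rademacher-type a.e. differentiation, $\tfrac{d}{dt}\rho_{\min}\ge -G\rho_{\min} = -F\rho_{\min}^2 \ge -C_F\rho_{\min}^2$ only if $\rho_{\min}\le\rho$ everywhere, which forces the ODE comparison $\rho_{\min}(t)\ge (\rho_{\min}(0)^{-1}+C_F t)^{-1}$. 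This stays positive on any finite interval but not uniformly in $t$; to get a genuinely uniform lower bound I would instead use $e(t,x) := \d_x u - \mathcal{L}\rho + $ (lower-order terms) — more precisely the quantity $G$ itself controls the sign. Since $G = \rho F$ and $F$ is transported, the sign of $G$ is preserved along characteristics; if $G_0\le 0$ the term $-G\rho\ge 0$ helps and the bound is immediate. In general one splits and uses that $\int G\,dx = 2\pi\nu$ together with boundedness of $G$ (which follows once $\rho$ is bounded above) to close. I expect the lower bound to require this ODE-along-characteristics argument combined with the global-in-time control coming from the alignment, analogous to \cite{do2018global}.

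\textbf{Upper bound.} This is the harder half and is where the H\"ormander--Mikhlin condition \eqref{aug106} enters, via the promised "Constantin--Vicol nonlocal maximum principle" (Lemma~\ref{lem:aug14}). At a spatial maximum $x_M$ of $\rho(t,\cdot)$ we have $\mathcal{L}\rho(x_M)\ge 0$, and one needs a quantitative lower bound of the form $\mathcal{L}\rho(x_M)\ge c\,\rho(x_M)\,M(\text{something})$ or $\mathcal{L}\rho(x_M)\ge c\,(\rho(x_M)-\kappa)\cdot(\text{modulus of the spread of }\rho)$. The standard Constantin--Vicol estimate gives, for $\osc\,\rho$ large compared to the local variation, a dissipation term that beats the forcing $-G\rho$; combined with $|G\rho|\le C_F\rho^2$ and the fact that $\rho$ has fixed mean $\kappa$, one gets $\tfrac{d}{dt}\rho_{\max}\le C_F\rho_{\max}^2 - c\,\rho_{\max}\mathcal{L}\rho(x_M)$, and the nonlocal term, being superlinear in $\rho_{\max}$ once $\rho_{\max}\gg\kappa$ (because then a definite proportion of mass sits far below $\rho_{\max}$), dominates. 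The doubling property \eqref{doubling} and \eqref{aug1010} are used to compare the kernel at different scales inside this estimate, and \eqref{aug1006}--\eqref{aug1008} to handle the mildly singular (non-scale-invariant) nature of $M$. The main obstacle, as anticipated, is making the Constantin--Vicol argument work with a kernel that is only \emph{mildly} singular: the classical proof uses the fractional scaling $|z|^{-1-\alpha}$ crucially, and here one must replace scaling by the doubling and monotonicity hypotheses, quantifying the gain from the dissipative term purely in terms of $M(r)$ and $\psi(r)$ rather than powers of $r$.

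\textbf{Assembly.} Having both bounds as a priori estimates valid on any interval on which the solution is smooth, together with the conservation of $\kappa$ and $\nu$ and the transport bound \eqref{aug118}, I would close by a continuity/bootstrap argument: $c_0$ and $C_0$ depend only on $c_0^{\mathrm{initial}} = \min\rho_0$, $\|\rho_0\|_{L^\infty}$, $\|F_0\|_{L^\infty}$, $\kappa$ and $\nu$, hence on $(\rho_0,u_0)$ alone, and the bounds do not degenerate in time. This is exactly the input needed to treat \eqref{main} as the parabolic integro-differential equation \eqref{fullholder} with bounded right-hand side in the subsequent H\"older estimate of Theorem~\ref{thm-holder}.
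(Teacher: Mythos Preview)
Your upper-bound sketch is on the right track and matches the paper's approach: at the spatial maximum one invokes the Constantin--Vicol-type lemma (the paper's Lemma~\ref{lem:aug14}), which uses the mean-zero primitive $\Phi$ of $\rho-\kappa$ and the uniform bound $\|\Phi\|_{L^\infty}\le C\|\rho_0\|_{L^1}$ to produce $\mathcal{L}\rho(\bar x)\ge c\,\theta(\bar x)\,M\big(C\|\Phi\|_{L^\infty}/\theta(\bar x)\big)$; combined with $\d_x u=F\rho+\mathcal{L}\rho$ and the singularity of $M$ at $0$, this forces $\d_x u(\bar x)>0$ once $\rho(\bar x)$ is large, so $\rho_{\max}$ cannot grow.

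The lower bound, however, has a genuine gap. You correctly observe that discarding the dissipation, i.e.\ using only $-\rho_{\min}\mathcal{L}\rho(x_{\min})\ge 0$, yields at best $\dot\rho_{\min}\ge -C_F\rho_{\min}^2$ and hence the non-uniform bound $\rho_{\min}(t)\ge(\rho_{\min}(0)^{-1}+C_Ft)^{-1}$; your suggested fixes (sign of $G$, splitting via $\nu$) do not close this. The missing idea is that the dissipative term itself supplies the uniform mechanism: at the minimum one has the \emph{quantitative} estimate
\[
-\mathcal{L}\rho(\underline{x})=\sum_{j}\int_{\mathbb T}\big(\rho(\underline{x}+y)-\rho(\underline{x})\big)\psi(y+2\pi j)\,dy
\;\ge\;\psi(\pi)\int_{\mathbb T}\big(\rho(\underline{x}+y)-\rho(\underline{x})\big)\,dy
=2\pi\psi(\pi)\big(\kappa-\rho(\underline{x})\big),
\]
using periodicity, monotonicity of $\psi$, and conservation of the mean $\kappa$. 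Plugging this into $\dot\rho_{\min}\ge -\|F_0\|_{L^\infty}\rho_{\min}^2-\rho_{\min}\mathcal{L}\rho(\underline{x})$ gives
\[
\dot\rho_{\min}\;\ge\; -\big(\|F_0\|_{L^\infty}+2\pi\psi(\pi)\big)\rho_{\min}^2+2\pi\psi(\pi)\,\kappa\,\rho_{\min},
\]
which is strictly positive once $\rho_{\min}$ drops below $2\pi\psi(\pi)\kappa/(\|F_0\|_{L^\infty}+2\pi\psi(\pi))$. That is the time-independent barrier you were looking for, and it depends only on $\kappa$, $\|F_0\|_{L^\infty}$, and $\psi(\pi)$.
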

The proof is a combination of the Constantin-Vicol maximum nonlocal principle used in~\cite{shvydkoy2017eulerian2}
in the case when $m(x)$ is singular at $x=0$ with the strategy of~\cite{do2018global}.
%
%
The function~$\Phi(t,x)$ defined in~(\ref{nov2914})-(\ref{nov2916}) satisfies a uniform bound 
\begin{equation}\label{aug124}
\|\Phi(t,\cdot)\|_{L^\infty}\le\|\theta(t,\cdot)\|_{L^1}\le C\|\rho_0\|_{L^1}.
\end{equation}
We have the following version of the Constantin-Vicol nonlocal
maximum principle. 
\begin{lemma}\label{lem:aug14}
Let $\rho(x)$ be a smooth periodic function attaining its maximum at a point $\bar x\in\T$.
There exists a positive constant $c,\tilde{c}$ such that either
\begin{align}\label{aug120}
\mathcal{L}\rho(\bar{x}) \geq c\theta(\bar{x}) M\Big(\farc{\|\Phi\|_{L^\infty}}{c\theta(\bar{x})}\Big), \,\,\, 
\text{ or }\,\,\, \theta(\bar{x})\leq \tilde{c}||\Phi||_{L^\infty}.
\end{align}
\end{lemma}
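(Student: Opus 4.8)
The plan is to estimate $\mathcal{L}\rho(\bar x)$ from below directly from the definition~(\ref{aug102}), exploiting that $\bar x$ is a maximum point so every term $\rho(\bar x)-\rho(\bar x - z)$ in the integrand is nonnegative. Writing $\rho = \kappa + \theta = \kappa + \d_x\Phi$, and using that $\psi$ is even, I would symmetrize the integral over $z \leftrightarrow -z$ so that
\[
\mathcal{L}\rho(\bar x) = \frac12\int_{\R}\psi(z)\big(2\rho(\bar x) - \rho(\bar x+z) - \rho(\bar x - z)\big)\,dz,
\]
with a nonnegative integrand. The strategy is then to keep only the contribution from an annulus $\{\,\delta \le |z| \le 1\,\}$ (or on the torus, $\delta \le |z| \le \pi$) for a radius $\delta$ to be chosen, since on that annulus $\int \psi(z)\,dz \gtrsim M(\delta)$ by~(\ref{aug108}) (modulo the tail, which is bounded and can be absorbed into constants on the periodic domain).

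The second ingredient is a lower bound on $2\rho(\bar x) - \rho(\bar x + z) - \rho(\bar x - z) = \theta(\bar x) - \d_x\Phi(\bar x + z)\cdot\!(\ldots)$ — more precisely, since $\rho(\bar x \pm z) = \theta(\bar x) + \kappa - \big(\Phi(\bar x \pm z) - \Phi(\bar x)\big)'$... cleaner is: $\rho(\bar x) - \rho(\bar x + z) = \theta(\bar x) - \theta(\bar x + z)$ and I integrate $\theta = \Phi'$ against $\psi$. Using $\|\Phi\|_{L^\infty} \le C\|\rho_0\|_{L^1}$ from~(\ref{aug124}), integration by parts in $z$ gives
\[
\int_{\delta \le |z| \le \pi}\!\!\psi(z)\big(\rho(\bar x) - \rho(\bar x+z)\big)dz
= \int_{\delta\le|z|\le\pi}\!\!\psi(z)\,\d_x\Phi(\bar x) \,dz - \big[\text{boundary} \big] + \int \psi'(z)\big(\Phi(\bar x+z)-\Phi(\bar x)\big)dz,
\]
and the H\"ormander--Mikhlin bound $|z\psi'(z)| \le C\psi(z)$ controls the last term by $C\|\Phi\|_{L^\infty}\int_{\delta}^{\pi}\psi(z)z^{-1}dz$, while the boundary terms at $|z|=\delta$ are bounded by $C\psi(\delta)\|\Phi\|_{L^\infty}$. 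Combining, one gets a bound of the shape
\[
\mathcal{L}\rho(\bar x) \ge c\,\theta(\bar x)\,M(\delta) - C\,\|\Phi\|_{L^\infty}\Big(\psi(\delta) + \int_\delta^\pi \tfrac{\psi(z)}{z}\,dz\Big),
\]
where the key point is that $\psi(\delta) \lesssim \delta^{-1}M(\delta)$ and $\int_\delta^\pi z^{-1}\psi(z)\,dz \lesssim \delta^{-1}M(\delta)$, both of which follow from the monotonicity assumption~(\ref{aug1006}) that $r\psi(r)/M(r)$ is non-decreasing (so $\psi(z) \le \psi(\delta)\delta/z \cdot M(z)/M(\delta)$... ) — this is the step I'd be most careful about.

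Finally I optimize in $\delta$: choose $\delta = \|\Phi\|_{L^\infty}/(c\theta(\bar x))$ (legitimate only when this is $\le r_0$, i.e.\ when $\theta(\bar x)$ is not too small, which is exactly the alternative — if $\delta$ would exceed $r_0$ we land in the second case $\theta(\bar x) \le \tilde c\|\Phi\|_{L^\infty}$). With this choice the error term $C\|\Phi\|_{L^\infty}\delta^{-1}M(\delta) = C c\,\theta(\bar x)\,M(\delta)$ is a fixed fraction of the main term, so for $c$ small enough it is absorbed, leaving $\mathcal{L}\rho(\bar x) \ge \tfrac{c}{2}\theta(\bar x) M\big(\|\Phi\|_{L^\infty}/(c\theta(\bar x))\big)$, which is~(\ref{aug120}). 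The main obstacle, as flagged, is making the error-term estimates $\psi(\delta) + \int_\delta^\pi z^{-1}\psi(z)\,dz \lesssim \delta^{-1}M(\delta)$ rigorous purely from~(\ref{aug1002})--(\ref{aug1006}) without the regular-variation assumption~(\ref{aug1040}); the non-decreasing ratio condition~(\ref{aug1006}) together with doubling for $M$ (Lemma~\ref{claimdoub}) is precisely what is designed to supply this, but it needs to be invoked carefully, splitting the integral dyadically and summing a geometric-type series in $M$-values.
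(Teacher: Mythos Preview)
Your approach is essentially the paper's: a Constantin--Vicol nonlocal maximum principle, integrating $\theta(\bar x + y) = \partial_y\Phi(\bar x + y)$ by parts against the kernel on an annulus and then optimizing the inner radius. The one technical difference is that the paper replaces your sharp annulus $\{\delta \le |z| \le \pi\}$ by a smooth cutoff $\chi(y/R)$ vanishing on $|y| \le R/2$; this eliminates the boundary term $\psi(\delta)\|\Phi\|_{L^\infty}$ altogether, and the remaining error $\int|\Phi(\bar x+y)|\,|\partial_y(\psi(y)\chi(y/R))|\,dy$ is bounded, via the H\"ormander--Mikhlin condition~(\ref{aug106}), by $(C/R)\int_{R/2}^\pi\psi \le (C/R)M(R/2) \le (C/R)M(R)$ using only the doubling property of $M$ (Lemma~\ref{claimdoub}).

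Your flagged ``main obstacle'' is in any case not one: the bound $\int_\delta^\pi z^{-1}\psi(z)\,dz \le \delta^{-1}\int_\delta^\infty\psi = \delta^{-1}M(\delta)$ is immediate (just use $z^{-1} \le \delta^{-1}$ on the domain of integration), and $\psi(\delta) \le C\delta^{-1}M(\delta)$ follows from monotonicity of $\psi$ together with the doubling condition~(\ref{doubling}) via $\delta\psi(\delta) \le C\delta\psi(2\delta) \le C\int_\delta^{2\delta}\psi \le CM(\delta)$. No dyadic decomposition or appeal to~(\ref{aug1006}) is needed here.
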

\begin{proof}
The proof is very similar to \cite{constantin2012nonlinear}. Let $\chi(x)$ be a radially non-decreasing smooth cut-off function  
such that $\chi(x) = 0$ for $|x|\leq 1/2$ and $\chi(x) =1$ for $|x|\geq 1$. We have, by the periodicity of $\rho$ and $\psi(y)$ being 
decreasing and even, for any $R\in(0,\pi)$:
\begin{align*}
\mathcal{L}\rho(\bar{x}) &= \sum_{j\in \mathbb{Z}}\int_{\mathbb{T}}(\rho(\bar{x}) - \rho(\bar{x}+y))\psi(y+2\pi j)
dy\geq \int_{\mathbb{T}}(\rho(\bar{x}) - \rho(\bar{x}+y))\psi(y)
\chi\Big(\frac{y}{R}\Big) dy  \\
&= \int_{\mathbb{T}}(\theta(\bar{x}) - \theta(\bar{x}+y))\psi(y)
\chi\Big(\farc{y}{R}\Big) dy\\
&\geq 2\theta(\bar{x})\int_{R}^{\pi} \psi(y) 
dy - \int_{\mathbb{T}} \bigl|\Phi(\bar{x}+y)\bigr| \bigl|\d_y\bigl[ \psi(y) 
\chi(\farc{y}{R})\bigr]\bigr| dy.
\end{align*}
We used integration by parts in the last integral above. The H\"ormander-Mikhlin condition~(\ref{aug106}) 
implies  
\begin{align}\label{aug1330}
\Big| \d_y\Big( \psi(y)
\chi(\farc{y}R)\Big) \Big| &=\Big|\psi'(y)\chi(\farc{y}R)+\frac{1}{R}\psi(y)\chi'(\farc{y}R)\Big|
\le C \frac{\psi(y)}{|y|}\chi(\farc{y}R)  +  \frac{\psi(y)}{R}\chi'(\frac{y}R).
\end{align}
Therefore, we get
\begin{align*}
\mathcal{L}\rho(\bar{x}) &\ge 2 \theta(\bar{x})\int_{R}^{\pi} \psi(y)ds 
-C\|\Phi\|_{L^\infty} \int_{\mathbb{T}} \Big(\frac{\psi(y)}{|y|}\chi(\frac{y}R)  +  \frac{\psi(y)}{R}\chi'(\farc{y}R)\Big)dy\\
&\ge 2\theta(\bar{x})(M(R)-M(\pi)) -  \frac{ C\|\Phi\|_{L^\infty}}{R} \int_{R/2}^{\pi} \psi(y)dy\\ 
&\ge 2\theta(\bar{x})(M(R)-M(\pi)) - C \frac{ \|\Phi\|_{L^\infty}}{R}M(R/2) 
\ge \theta(\bar{x})M(R)- C\frac{ \|\Phi\|_{L^\infty}}{R}M(R), 
\end{align*}
provided that $R$ is sufficiently small: $R<R_0$ with $R_0$ independent of the function $\rho$.  
We used (\ref{aug1010}) in the last inequality above.
If we have
\begin{equation}\label{aug112}
R_\Phi=\farc{2C\|\Phi\|_{L^\infty}}{\theta(\bar x)}<R_0,
\end{equation}
then we can take $R=R_\Phi$, leading to
\begin{align*}
\mathcal{L}\rho(\bar{x}) \geq \farc{\theta(\bar{x})}{2} M\Big(\farc{C\|\Phi\|_{L^\infty}}{\theta(\bar{x})}\Big).
\end{align*}
On the other hand, if (\ref{aug112}) does not hold, then we have
\begin{align*}
\theta(\bar{x})\leq \tilde{c}\|\Phi\|_{L^\infty}.
\end{align*}
\end{proof}

\subsubsection*{Proof of Proposition~\ref{lem:aug1-02}}
 
As $\rho$ satisfies
\begin{equation}\label{aug114}
\partial_t\rho+u\d_x\rho=-\rho\d_xu,
\end{equation}
in order to prove the upper bound $\|\rho(t,\cdot)\|_{L^\infty}\leq C_0$, with some
$C_0>\|\rho_0\|_{L^\infty}$, it is sufficient to show that 
$\d_x u(t,\bar{x}) >0$ if
$\rho(t,\bar{x})\geq C_0$. 
Moreover, we only need to consider the situation when
\begin{equation}\label{aug126}
\theta( t,\bar{x})\ge C\|\rho_0\|_{L^1},
\end{equation}
with a sufficiently large constant $C>0$ for otherwise we are done. In other words, because of~(\ref{aug124}),
we may assume that the
first alternative in (\ref{aug120}) holds. In particular, if $C>10$, it follows from (\ref{aug126}) that
\begin{equation}\label{aug128}
\frac{1}{2}\rho(t,\bar{x})\le \theta(t,\bar{x}) \le \rho(t,\bar{x})  .
\end{equation}
Using the function~$F=G/\rho$, as in~(\ref{aug116}), the $L^\infty$-bound~(\ref{aug118})
on $F$ and the  
first alternative in (\ref{aug120}), together with (\ref{aug128}), imply that
\begin{align*}
\d_x u(t,\bar{x})&= F(t,\bar{x})\rho(t,\bar{x}) + \mathcal{L} \rho(t,\bar{x})\geq 
c\theta(\bar{x}) M\Big(\farc{\|\Phi\|_{L^\infty}}{c\theta(\bar{x})}\Big) - \|F_0\|_{L^\infty} \rho(t,\bar{x})\\
&\geq \farc{c}{2}\rho(t,\bar{x})M\Big(\farc{\|\Phi\|_{L^\infty}}{c\rho(t,\bar{x})} \Big)- \|F_0\|_{L^\infty} \rho(t,\bar{x})
\end{align*}  
since $M(x)$ is a decreasing function. It follows  that $\d_x u(t,\bar{x})>0$ when $\rho(t,\bar{x})$ is large enough 
because of the uniform bound (\ref{aug124}) on $\Phi$ and  the singularity of $M(r)$ as $r\to 0$. This proves the upper bound
in Proposition~\ref{lem:aug1-02}. 

For the uniform positive lower bound, we let   $\underline{x}$ be 
a minimal point so that
\[
\rho(t,\underline{x}) = \min_{x\in \mathbb{T}} \rho(t,x),
\]
and from (\ref{main}) and $F = G/\rho$ we have 
\[
\d_t \rho + u\d_x \rho  = -F\rho^2 -  \rho \mathcal{L} \rho.
\]
Therefore, at the minimal point we have
\begin{align*}
\d_t \rho( t,\underline{x})&\geq -\|F_0\|_{L^\infty} \rho( \underline{x})^2-\rho(\underline{x})  
\sum_{j\in \mathbb{Z}}\int_{\mathbb{T}}(\rho(\underline{x}) - \rho(\underline{x}+y))\psi(y+2\pi j)dy\\ 
&\geq -\|F_0\|_{L^\infty} \rho( \underline{x})^2+\rho(\underline{x})  \int_{\mathbb{T}}(\rho(\underline{x}+y) - \rho(\underline{x}))
\psi(y)dy \\ 
& \geq -\|F_0\|_{L^\infty} \rho( \underline{x})^2+\rho(\underline{x})  \psi(\pi)
\bigl( 2\pi \kappa-2\pi \rho(\underline{x})   \bigr)\\
& = -(\|F_0\|_{L^\infty} + 2\pi \psi(\pi)) \rho( \underline{x})^2+ 2\pi\psi(\pi) \kappa \rho( \underline{x}).
\end{align*}
It follows that $\d_t \rho( t,\underline{x})>0$ if $\rho(t,\underline{x})$  is sufficiently small, thus 
there exists $c_0>0$ such that~$\rho(t,\underline{x})\geq c_0$.~$\qed$

The uniform bound (\ref{aug118}) on $F=G/\rho$ and Lemma~\ref{lem:aug1-02} give an upper bound on $G$: 
\begin{equation}\label{aug130}
|G(t,x)| \le C |\rho(t,x)|\le \tilde{C}.
\end{equation}
The function $Q = \d_x F/\rho$ also satisfies the transport equation
\begin{equation}\label{aug131}
\d_t Q +u\d_x Q = 0,
\end{equation}
which gives the bounds
\begin{equation}\label{aug132}
\begin{aligned}
 &|\d_x F(t,x)| \le C |\rho(t,x)|\le C,\\
 &|\d_x G(t,x)|\leq |\d_x F|\rho + |F||\d_x \rho|\le C[ \rho(t,x)^2 + |\rho_x(t,x)|]\le C (1+|\rho_x(t,x)|).
 \end{aligned}
 \end{equation}
The arguments leading to (\ref{aug130})-(\ref{aug132}) can be iterated to obtain a higher order control of $G$: 
the function $\d_x Q/\rho$ satisfies the transport equation, and so on, 
leading to the hierarchical point-wise bounds as in \cite{shvydkoy2017eulerian1}, \cite{shvydkoy2017eulerian3}:
 \begin{equation}\label{dec510}
|G^{(n)}(t,x)| \le C_n( |\rho^{(n)}(t,x)| +  |\rho^{(n-1)}(t,x)| +\cdots + |\rho(t,x)|). 
\end{equation}
 
\section{The proof of Theorem~\ref{thm-holder}} 

 \subsection{A H\"older regularity result for a class of linear integro-differential equations}
 
We first investigate the H\"{o}lder estimates for solutions to a class of integro-differntial equations  of the form
\begin{align}\label{homogmain}
\d_t q + v(t,x)\cdot\nabla q + \mathscr{L} q = 0,
\end{align}
in $\R^d$ with an operator $\mathscr L$ given by
\begin{equation}\label{aug602}
\mathscr{L} q(t,x) = \int_{\mathbb{R}^d} (q(t,x)-q(t,x+z)) k(x,z,t) dz,
\end{equation}
and a kernel $k(x,z,t)$ such that there exist $C>0$ and a function $\eta(r)$ such that
\begin{equation}\label{aug604}
\frac{1}{C}\eta(|z|)\le k(x,z,t)\le C\eta(|z|).
\end{equation}
We assume here  $\eta(r)$ satisfies the following properties, as in our assumptions on
$\psi(r)$. First, we suppose that for any $\alpha\in(0,1)$, there exists $c_{\alpha}$ such that
\[
\eta(r)\ge \frac{1}{c_\alpha r^{d-\alpha}} \hbox{ for all $0<r\le 1$}, 
\]
and 
\begin{equation}\label{aug606}
M_\eta(r):=\int_r^\infty \eta(s)s^{d-1} ds\to +\infty\hbox { as $r\to 0$}.
\end{equation}
We also assume that the function $M_\eta(r)$ satisfies  (\ref{aug1006}) in the $d$-dimensional
form
\begin{equation}\label{jan206}
\frac{r^d\eta(r)}{M_\eta(r)}\hbox{ is non-decreasing for $0\le r\le r_0$},
\end{equation}
and (\ref{aug1008}), which, as we recall, implies
\begin{align}\label{aug608}
s{M_\eta(s)} \le r M_\eta(r) \,\,\, \text{ for } 0<s\leq r\leq r_0.
\end{align}
We also assume that the drift $v(t,x)$ grows at most linearly at infinity, and is
$M_\eta$-Lipschitz continuous in $x$ for each~$t>0$, in the sense that there 
exist $C, C'>0$ such that
\begin{eqnarray}
&& \farc{|v(t,x)|}{1+|x|}\le C,~~\hbox{ for all $0\le t$, and $x\in\R$,}\label{uest0}\\
&&|v(t,x) - v(t,y)|\le C' |x-y| M_\eta(|x-y|) \text{ for  $t\ge 0$ and } |x-y|\leq 1.\label{uest}
\end{eqnarray}
Our goal in this section is to show the following by an analytical approach.
\begin{proposition}  \label{holderthm}
Assume that (\ref{aug604})-(\ref{uest}) hold, and let $q(t,x)$ be 
a solution to (\ref{homogmain}).
There exists $\beta \in (0,1)$, $r_0>0$ and a sufficiently small constant $c'>0$ such that for any $0<t\leq 1$ 
and $|x-y| \leq \min(r_0,M^{-1}_\eta(c'/t))$, we have
\begin{align}\label{Mholder}
|q(t,x) - q(t,y)|\le K t^{-{\beta}} [M_\eta(|x-y|)]^{-\beta}
\sup_{0\le t\le 1,x\in\R^d}|q(t,x)|.
\end{align}
\end{proposition}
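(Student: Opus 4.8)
\textbf{Proof proposal for Proposition~\ref{holderthm}.}

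The plan is to adapt the modulus of continuity / quantitative comparison principle method of Silvestre to our mildly singular, non-fractional setting, with the scaling role of $r^{-\alpha}$ replaced throughout by $M_\eta(r)$. We may normalize $\sup_{0\le t\le 1,x}|q(t,x)|\le 1$ by linearity. The central object will be a family of moduli $\omega_t$, parametrized by $t\in(0,1]$, such that we want to show $|q(t,x)-q(t,y)|\le \omega_t(|x-y|)$; the target modulus has the form $\omega_t(r)\sim K t^{-\beta}[M_\eta(r)]^{-\beta}$ for $r$ small, capped at a constant for $r$ large. First I would set up the standard argument by contradiction: if the estimate fails, there is a first time $t_*$ and points $x_*\ne y_*$ at which $q(t_*,x_*)-q(t_*,y_*)=\omega_{t_*}(|x_*-y_*|)$, with the inequality holding (non-strictly) for all earlier times. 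At such a touching point one computes $\partial_t[q(t,x_*)-q(t,y_*)]\ge \partial_t\omega_{t_*}(r_*)$ where $r_*=|x_*-y_*|$, and one must show the spatial terms from \eqref{homogmain} push the other way, i.e. that the dissipative term $-\mathscr L q(t_*,x_*)+\mathscr L q(t_*,y_*)$ plus the drift term is strictly more negative than $\partial_t\omega_{t_*}(r_*)$, giving a contradiction.

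The two spatial contributions are handled separately. For the drift term $v(t_*,x_*)\cdot\nabla q(t_*,x_*)-v(t_*,y_*)\cdot\nabla q(t_*,y_*)$: at the touching point the function $z\mapsto q(t_*,z)$ is trapped under a translate of $\omega_{t_*}$, so $\nabla q$ at $x_*$ and $y_*$ is controlled by $\omega_{t_*}'(r_*)$; combining this with the $M_\eta$-Lipschitz bound \eqref{uest} and the linear growth bound \eqref{uest0} (the latter only matters to ensure $x_*,y_*$ stay in a bounded region, via a preliminary localization/barrier argument at spatial infinity), the drift term is bounded by $C r_* M_\eta(r_*)|\omega_{t_*}'(r_*)| + C|v| \cdot(\text{difference of gradients})$, and for our choice of $\omega$ this will be dominated. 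For the nonlocal dissipation I would use the now-standard estimate: writing $z\mapsto z$ split into the ``good'' cone around the direction $x_*-y_*$ and the complement, the contribution from the region $|z|\lesssim r_*$ is $\le -c\,|\omega_{t_*}''(r_*)|\,r_*^2\,\eta(r_*)\,r_*^{d-\cdots}$-type (using the concavity of $\omega$), while the far region $|z|\gtrsim r_*$ contributes a gain proportional to $[\omega_{t_*}(r_*)-\omega_{t_*}(2r_*)+\cdots]$ integrated against $\eta(|z|)|z|^{d-1}$, i.e. essentially $-c\,\omega_{t_*}(r_*)\,M_\eta(r_*)$ up to lower-order pieces, using \eqref{aug604}, the lower bound on $\eta$, and \eqref{aug606}. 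The monotonicity assumptions \eqref{jan206}--\eqref{aug608} are exactly what make these kernel integrals behave like clean powers of $M_\eta$ under the relevant dyadic decompositions, playing the role that regular variation \eqref{aug1040} plays in \cite{chen2014holder,kassmann2017intrinsic}.

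It then remains to choose $\omega_t$ so that the differential inequality
\[
\partial_t\omega_t(r) \;<\; c\,\omega_t(r)\,M_\eta(r)\;-\;C\,r\,M_\eta(r)\,|\omega_t'(r)|
\]
holds for all $r\le \min(r_0,M_\eta^{-1}(c'/t))$. This is where the precise form $\omega_t(r)=K t^{-\beta}[M_\eta(r)]^{-\beta}$ (for small $r$, flattened to a constant $\sim 2$ for larger $r$ so that $\omega$ is a genuine modulus comparable with the trivial bound $|q|\le 1$) comes in: for this $\omega$ one has $\partial_t\omega_t(r)=-\beta t^{-1}\omega_t(r)$, which is negative and, in the regime $M_\eta(r)\ge c'/t$, dominates in size the right-hand side's would-be obstruction; and $r M_\eta(r)|\omega_t'(r)|\sim \beta\, \omega_t(r)\,\dfrac{rM_\eta'(r)}{M_\eta(r)}\le C\beta\,\omega_t(r)$ because $rM_\eta'(r)= -r^d\eta(r)\cdot(\text{sign})$ and $r^d\eta(r)/M_\eta(r)$ is bounded on $(0,r_0)$ — here both \eqref{jan206} and the non-integrability are used, after checking $r^d\eta(r)\le C M_\eta(r)$ from the doubling-type consequences. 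Choosing $\beta$ small and $K$ large (depending only on $c,C,r_0$) closes the inequality. I expect the main obstacle to be the bookkeeping in the nonlocal term — establishing rigorously, from only the monotonicity hypotheses \eqref{jan206}--\eqref{aug608} and the two-sided bound \eqref{aug604}, that the far-field gain is $\gtrsim \omega_t(r) M_\eta(r)$ uniformly while the near-field loss is genuinely lower order — together with the technical point of confining the contradiction points $(x_*,y_*)$ to a bounded set despite the merely linearly growing, non-Lipschitz drift, which requires a separate barrier/penalization argument before the modulus-of-continuity machinery can even start.
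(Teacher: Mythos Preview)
Your plan is a genuine \emph{modulus of continuity / breakthrough} argument in the style of Kiselev--Nazarov--Volberg, which is a different route from the paper's proof. The paper does \emph{not} run a two-point breakthrough argument; it re-centers along a Lagrangian trajectory (so the drift vanishes at the origin), then proves a De~Giorgi-type oscillation decay on $M_\eta$-parabolic cylinders $Q(r_n)$ via a ``diminish of maximum'' point estimate (Lemma~\ref{pointest}). That lemma is proved at a \emph{single} spatial point by a barrier comparison, which is precisely why the $x$-dependence of the kernel $k(x,z,t)$ causes no trouble: at the point $(t_0,x_0)$ where the barrier is touched, one simply replaces $k(x_0,z,t_0)$ by its two-sided bound~\eqref{aug604} termwise. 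By contrast, your approach requires a lower bound on the \emph{difference} $\mathscr{L}q(x_*)-\mathscr{L}q(y_*)$, and here the $x$-dependence is a real obstacle: with only~\eqref{aug604} and no regularity of $k$ in $x$, the two integrals have different kernels $k(x_*,z,t_*)$ and $k(y_*,z,t_*)$, and the cancellation that produces the standard integrand $2\omega(r_*)-\omega(|\xi+z|)-\omega(|\xi-z|)$ is lost. You do not address this, and it is exactly the reason the literature on $x$-dependent kernels with merely two-sided bounds (Caffarelli--Chan--Vasseur, Kassmann--Mimica, Silvestre~\cite{silvestre2012holder}) uses De~Giorgi iteration rather than the two-point modulus method.

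There is also a gap in your dissipation bookkeeping even in the translation-invariant case. You claim the far region $|z|\gtrsim r_*$ produces a \emph{gain} of order $\omega_t(r_*)M_\eta(r_*)$; in fact, for any increasing modulus and $|z|\ge 2r_*$ one has $|\xi\pm z|\ge r_*$ and hence $2\omega(r_*)-\omega(|\xi+z|)-\omega(|\xi-z|)\le 0$, so the far field is a \emph{loss}. All the gain must come from the near field via concavity of $\omega$, i.e.\ from a quantity like $-\omega''(r_*)\int_{|z|<r_*}|z|^2\eta(|z|)\,dz$, and for your choice $\omega_t(r)=Kt^{-\beta}[M_\eta(r)]^{-\beta}$ the sign and size of $\omega''$ depend on $\eta'$, which is \emph{not} controlled by the hypotheses of this section (there is no H\"ormander--Mikhlin assumption on $\eta$ here, only~\eqref{aug604}--\eqref{aug608}). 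So both the sign and the claimed scaling $c\,\omega_t(r_*)M_\eta(r_*)$ of the dissipative gain are unjustified. Finally, the paper you cite as ``Silvestre'' for this method,~\cite{silvestre2012holder}, is itself a one-point De~Giorgi/point-estimate argument, not a two-point modulus-of-continuity argument, so the template you are invoking is not the one you describe.
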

The first step in the proof of Proposition~\ref{holderthm} is to re-center:
fix $(t_0,x_0)$ with~$0<t_0<1$ and $x_0\in\Rm^d$, and write
\begin{align*}
\tilde{q}(t,x) = q(t_0+t,x_0+x-A(t))-q(t_0,x_0)
\end{align*}
with the function $A(t)$ to be determined. Note that $\tilde q(0,0)=0$,
as long as $A(0)=0$. The function $\tilde q(t,x)$ satisfies
\begin{align}\label{nov1502}
\d_t\tilde q + \tilde v(t,x)\cdot\nabla\tilde q + \widetilde{\mathscr L}\tilde q = 0,
\end{align}
with
\begin{equation}\label{nov1506}
\tilde v(t,x)=v(t+t_0,x+x_0-A(t))+A'(t),
\end{equation}
and
\begin{equation}\label{nov1504}
\begin{aligned}
\widetilde {\mathscr L}\tilde q(t,x)=
\int_{\Rm^d}(\tilde q(t,x)-\tilde q(t,x+z))\tilde k(t,x,z)dz
\end{aligned}
\end{equation}
where
\[
\tilde k(t,x,z)=k(t+t_0,x+x_0-A(t),z).
\]
Note that the function $\tilde k(x,z,t)$ still satisfies the bounds (\ref{aug604})
by $\eta(z)$ from above and below.
We choose $A(t)$ as a 
solution to
\begin{equation}\label{nov1508}
A'(t)=-v(t_0+t, x_0-A(t)),~~A(0) = 0,
\end{equation}
so that $\tilde v(t,0)=0$. 
A solution to (\ref{nov1508}) exists due to the continuity of $v$ in $x$. 
Note that by the $M_\eta$-Lipschitz continuity (\ref{uest}) 
of $v(t,x)$ in $x$ and the choice of $A(t)$ we have 
\begin{equation}\label{nov1514}
|\tilde{v}(t,x)|=|v(t+t_0,x+x_0-A(t)) -v(t_0+t, x_0-A(t))|
\leq C'|x| M_\eta (|x|),~~|x|\le 1,
\end{equation}
and because of (\ref{uest0}), $\tilde v(t,x)$ is sublinear at infinity:
\begin{equation}\label{nov1516}
|\tilde v(t,x)|\le C_0(1+|x|), 
\end{equation}
with a constant $C_0$ that depends on $x_0$ and $t_0$.

To use a De Giorgi-type argument, given $r>0$, we define an $M_\eta$-parabolic cylinder
\begin{align}\label{cyl}
Q(r):=  (-c'/M_\eta(r), 0]\times B(r).
\end{align}
Here, $B(r) : = \{x\in \mathbb{R}^d: |x|<r \}$ is a ball in $\Rm^d$, and $c'>0$ is sufficiently small to be chosen later.
Proposition~\ref{holderthm} is a consequence of the following lemma.
\begin{lemma}\label{lem-nov1504}
There exist a sufficiently large constant $K>0$ and a sufficiently small constant~$c'>0$
that do not depend on $(t_0,x_0)$, and $r_0>0$ so that for
all~$0<r<r_0$ we have
\begin{align}\label{anotherver}
|\tilde{q}(t,x)| 
\leq K[M_\eta(r)]^{\beta}\Big(\frac{|t|}{c'} +\frac{1}{[M_\eta(|x|)]} \Big)^{\beta}
\sup_{-c'/M_\eta(r) \le t\le 0,x\in\R^d}|\tilde{q}(t,x)|,~~~\hbox{for all $(t,x)\in Q(r)$.}
\end{align}
\end{lemma}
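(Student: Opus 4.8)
**The plan is to prove Lemma~\ref{lem-nov1504} by a De Giorgi-type iteration on the $M_\eta$-parabolic cylinders $Q(r)$, following the oscillation-decay strategy of \cite{silvestre2012holder} but with the Euclidean scaling replaced by the $M_\eta$-scaling adapted to our mildly singular kernel.** The heart of the argument is a single \emph{diminish-of-oscillation} step: there is a universal $\theta\in(0,1)$ such that if $\osc_{Q(1)}\tilde q\le 1$ (after normalizing by the sup of $|\tilde q|$ over the largest cylinder), then $\osc_{Q(\rho_0)}\tilde q\le 1-\theta$ for a fixed scale $\rho_0<1$. Iterating this at the geometric scales $r_n$ defined implicitly by $M_\eta(r_n)=\rho_0^{-n}M_\eta(r_0)$ — which is exactly what makes $[M_\eta(r)]^\beta$ the natural modulus, with $\beta=\log(1-\theta)^{-1}/\log\rho_0^{-1}$ — and summing, one gets the bound \eqref{anotherver}. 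The passage from "oscillation controlled on a sequence of shrinking cylinders" to "the pointwise estimate at $(t,x)\in Q(r)$" is routine once one checks that any such point lies in $Q(r_n)$ for the appropriate $n$, using the precise shape $Q(r)=(-c'/M_\eta(r),0]\times B(r)$ and the fact (from \eqref{aug608}) that $rM_\eta(r)$ is monotone, which controls how the time-length and the ball-radius shrink together.

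**To get the diminish-of-oscillation step I would combine two ingredients.** First, a \emph{De Giorgi first lemma} (energy/measure estimate): if $\tilde q\le 1$ on $Q(1)$, $\tilde q\le 0$ on a set of positive measure in a lower portion of $Q(1)$, and the "tail" of $\tilde q$ outside $Q(1)$ is not too large, then $\tilde q\le 1-\theta$ on $Q(1/2)$. This is proved by testing the equation \eqref{nov1502} against truncations $(\tilde q-\ell)_+$, using the nonlocal Caccioppoli/energy inequality that the lower bound $k\ge C^{-1}\eta(|z|)$ in \eqref{aug604} produces — here is where \eqref{jan206} and the non-integrability of $\eta$ are used, since they guarantee the nonlocal bilinear form dominates a genuine (fractional-type, but $M_\eta$-calibrated) Sobolev seminorm and hence a local $L^p$-gain via a Sobolev/Nash-type embedding. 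The drift is absorbed because $\tilde v(t,0)=0$ and \eqref{nov1514} give $|\tilde v(t,x)|\le C|x|M_\eta(|x|)$ on $B(1)$, which is subcritical relative to the dissipation at the unit scale — this "$M_\eta$-Lipschitz, hence good drift" point is precisely the place where our weaker-than-fractional setting still works, paralleling Yudovich-type log-Lipschitz arguments. Second, an \emph{isoperimetric / De Giorgi second lemma}: if $\tilde q\le 1$ on $Q(1)$ and $\tilde q$ is neither mostly $\le 0$ nor mostly $\ge 1-\theta$, the measure of the "intermediate" set is bounded below; iterating between levels forces either $\sup\le 1-\theta$ or $\inf\ge -(1-\theta)$ on a smaller cylinder, which is the oscillation decay. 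The nonlocal term's long-range contribution is handled by the quantitative comparison principle of \cite{silvestre2012holder}: since at scale $r$ the operator $\mathscr L$ "sees" mass at distance $\sim 1$ with weight $\sim\eta(1)$ while the dissipation at scale $r$ has strength $\sim M_\eta(r)\to\infty$, the tail is a lower-order perturbation provided $c'$ in the definition of $Q(r)$ is chosen small enough (this fixes $c'$).

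**The main obstacle I anticipate** is making the De Giorgi energy estimate quantitatively uniform across the non-Euclidean scales — i.e. proving a scale-invariant Caccioppoli inequality and Sobolev embedding where "scale $r$" means "$M_\eta(r)$ units of dissipation" rather than "$r^{-\alpha}$". Under regular variation \eqref{aug1040} one has clean homogeneity and \cite{kassmann2017intrinsic} supplies the embedding off the shelf; without it one must extract the needed uniformity purely from \eqref{aug1002}, the doubling conditions \eqref{doubling}/\eqref{aug1010}, the monotonicity hypotheses \eqref{aug1006}/\eqref{aug1008}, and Lemma~\ref{claimpow} (the bound $M_\eta(r^k)\le C_1C_2^k[M_\eta(r)]^k$), which is what lets one iterate a fixed geometric ratio $\rho_0$ rather than an $r$-dependent one and thereby obtain a genuine power $[M_\eta(|x-y|)]^{-\beta}$. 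The secondary technical point is the rescaling bookkeeping: after the re-centering to $\tilde q$ already performed in the excerpt, one further rescales $Q(r)\to Q(1)$ via $x\mapsto x/r$, $t\mapsto t\,M_\eta(r)/c'$, and must verify that the rescaled kernel still satisfies \eqref{aug604} with a function of the new $z$ obeying the same structural hypotheses (up to the uniform constants), and that the rescaled drift still obeys a bound of the form $|x|M_\eta(|x|)$ on the unit ball — both follow from the doubling and monotonicity assumptions but need to be stated carefully so the induction closes with constants independent of $r$ and of $(t_0,x_0)$.
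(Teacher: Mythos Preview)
Your outer iteration matches the paper's exactly: both reduce Lemma~\ref{lem-nov1504} to an oscillation-decay statement $\osc_{Q(r_n)}w\le b^{1-n}$ on the geometric sequence $r_n=M_\eta^{-1}(a^{n-1}M_\eta(r))$ (the paper's Lemma~\ref{lem-nov1502}), and the passage from this to \eqref{anotherver} with $\beta=\log b/\log a$ is just as you describe.

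The substantive divergence is in how the single diminish-of-oscillation step is obtained. You propose the classical De~Giorgi $L^2$-energy route: test against truncations $(\tilde q-\ell)_+$, extract a nonlocal Caccioppoli inequality from the lower bound on $k$, and close with an intrinsic Sobolev/Nash embedding calibrated to $M_\eta$. The paper instead proves its Lemma~\ref{pointest} by a \emph{pointwise barrier argument}: it constructs a supersolution $1-\zeta(t)\varrho_r(t,x)$ with a time-shrinking cutoff $\varrho_r$ and an ODE-driven amplitude $\zeta$, assumes $w$ crosses it at some $(t_0,x_0)$, and derives a contradiction by evaluating the equation there and bounding $\mathscr{L}w(t_0,x_0)$ from below using the mass of the ``good set'' $\{w\le 0\}$ directly. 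No energy inequality, no Sobolev embedding, no rescaling to the unit cylinder.

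This is not a cosmetic difference. The obstacle you yourself flag --- a scale-uniform Sobolev/Nash inequality for the $M_\eta$-seminorm without regular variation --- is exactly what the paper's route is engineered to bypass; the Kassmann--Mimica embedding leans on \eqref{aug1040}, and it is not at all clear that doubling plus the monotonicity hypotheses \eqref{aug1006}--\eqref{aug1008} recover it. Relatedly, your proposed rescaling $x\mapsto x/r$, $t\mapsto tM_\eta(r)/c'$ turns the kernel into $r^d k(\cdot,rz,\cdot)\sim r^d\eta(r|z|)$, which in the absence of homogeneity does \emph{not} satisfy \eqref{aug604} with an $r$-independent comparison function; the paper avoids this by never rescaling, carrying the scale $r$ explicitly through $\varrho_r$ and the time length $c'/M_\eta(r)$. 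So while your framework is correct, the inner step as written has a genuine gap at precisely the point the paper's comparison-principle method is designed to close under the weaker hypotheses.
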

In terms of the function $q(t,x)$,   (\ref{anotherver})
says that
\begin{equation}\label{nov1520}
|q(t+t_0,x+x_0-A(t))-q(t_0,x_0)|\le 
2K[M_\eta(r)]^{\beta}\Big(\frac{|t|}{c'}+\frac{1}{[M_\eta(|x|)]}\Big)^{\beta}
\sup_{t_0-c'/M_\eta(r) \le t\le t_0,x\in\R^d}|{q}(t,x)|,
\end{equation}
for $(t,x)\in Q(r)$. Taking $r=M^{-1}_\eta(c'/|t_0|)$ 
and setting $t' = t_0+t$ and $x' = x_0+x-A(t)$
in~(\ref{nov1520}) gives
\begin{equation}\label{nov1522}
|q(t',x')-q(t_0,x_0)|\le 
\farc{2K}{t_0^{\beta}}\Big( \frac{|t'-t_0|}{c'}+\frac{1}{M_\eta(|x'-x_0+A(t)|)}\Big)^{\beta}
\sup_{0 \le t\le t_0,x\in\R^d}|{q}(t,x)|,
\end{equation} 
for $0\le t'\le t_0$, and $|x'-x_0+A(t)|\le M^{-1}_\eta(c'/|t_0|)$. 
It follows from (\ref{nov1516}), 
that there exists~$\lambda_0>0$ which depends on $(t_0,x_0)$ such that 
\begin{align*}
|A(t)|\leq \lambda(e^{C_0|t|}-1 ),
\end{align*}
with $C_0$ as in (\ref{nov1516}), thus
\begin{equation}\label{nov1524}
|q(t', x') - q(t_0,x_0)| \leq \farc{2K}{ |t_0|^{\beta}}
\bigg(\frac{|t'-t_0 |}{c'}+ \frac{1}{M_\eta(|x'-x_0|+
\lambda_0(e^{C_0|t'-t_0|}-1 ))}\bigg)^{\beta} \sup_{0 \le t\le t_0,x\in\R^d}|q(t,x)|,
\end{equation}
for $t' \in (0, t_0]$ and 
\begin{equation}\label{nov1526}
|x'-x_0| +\lambda_0(e^{C_0|t'-t_0|}-1 )\le M^{-1}_\eta(c'/|t_0|).
\end{equation}
Setting $t'=t_0$ in (\ref{nov1524}) and (\ref{nov1526}) finishes the proof
of Proposition~\ref{holderthm}. Note that
both constants $C_0$ and $\lambda_0$ that may depend on $(t_0,x_0)$ disappear 
when $t'=t_0$. Thus, we only need to prove Lemma~\ref{lem-nov1504}.

\subsubsection*{The proof of Lemma~\ref{lem-nov1504}}

The proof uses a De Giorgi type argument. We fix $r>0$, 
and normalize  $\tilde q(t,x)$, setting 
\begin{align*}
w(t,x) = \frac{\tilde{q}(t,x)}{2||\tilde{q}(t,x)||_{L^{\infty}([-c'/M_\eta(r),0]\times \R^d)}}.
\end{align*}
We also define a decreasing sequence of radii $r_n$ as
\begin{align}\label{radius}
r_n = M^{-1}_\eta (a^{n-1} M_\eta(r)),
\end{align}
with some $a>2$ to be specified later. Note that
\[
M_\eta(r_n)=a M_\eta(r_{n-1}),
\]
thus $M_\eta(r_n)\to+\infty$, and $r_n\to 0$, as $n\to\infty$.
\begin{lemma}\label{lem-nov1502}
There exists $b\in(1,2)$ and $n_0\in\N$ so that for all $n\ge n_0$ we have
\begin{align}\label{osc}
\osc_{Q(r_n)} w(t,x) \le b^{1-n}.
\end{align}
\end{lemma}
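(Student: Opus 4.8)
The proof of Lemma~\ref{lem-nov1502} proceeds by induction on $n$, which is the standard mechanism behind De Giorgi–Nash–Moser oscillation decay adapted to the nonlocal, $M_\eta$-scaled setting. The base case holds for $n=n_0$ simply by choosing $b$ close enough to $1$ and $n_0$ large enough that $b^{1-n_0}\ge 1$, since $\|w\|_{L^\infty}\le 1/2<1$ on the whole slab $[-c'/M_\eta(r),0]\times\R^d$, and $Q(r_{n_0})$ is contained there provided $r_{n_0}\le r$ (which holds because $a^{n_0-1}M_\eta(r)\ge M_\eta(r)$ forces $r_{n_0}\le r$ by monotonicity of $M_\eta$). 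The inductive step is the substance: assuming $\osc_{Q(r_n)}w\le b^{1-n}$, I want to deduce $\osc_{Q(r_{n+1})}w\le b^{-n}$. To do so I rescale and re-normalize at level $n$: set $W = (w - c_n)/(b^{1-n}/2)$ where $c_n$ is the midpoint of the range of $w$ on $Q(r_n)$, so that $|W|\le 1$ on $Q(r_n)$, and observe that $W$ solves an equation of the same type (\ref{nov1502}) on the rescaled cylinder, because the operators $\widetilde{\mathscr L}$ and the drift $\tilde v$ transform consistently under the $M_\eta$-parabolic scaling that sends $Q(r_n)$ to a fixed reference cylinder — here is where the scaling relation $M_\eta(r_n) = aM_\eta(r_{n-1})$ and the bound (\ref{aug608}) on $rM_\eta(r)$ are used to guarantee the rescaled drift stays controlled and the rescaled kernel still satisfies bounds of the form (\ref{aug604}) with $\eta$ replaced by a comparable profile.

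\textbf{Key steps.} First I would establish a measure-theoretic dichotomy: on the reference cylinder, either $W\le 0$ on at least half the measure, or $W\ge 0$ on at least half the measure; WLOG assume the former. Second, I would run a De Giorgi iteration (an energy/level-set argument, or more cleanly a nonlocal point-estimate argument in the spirit of the quantitative comparison principle from \cite{silvestre2012holder} combined with the barrier constructions of \cite{kassmann2017intrinsic}) to upgrade "$W\le 0$ on half the measure" to "$W\le 1-\theta_0$ on a full sub-cylinder" for some fixed $\theta_0\in(0,1)$ independent of $n$. The nonlocal tail terms — integrals of $W(t,x+z)$ for $|z|$ large, where we only know $|w|\le 1/2$ rather than the tighter bound $b^{1-n}$ — must be absorbed; this is exactly why the cylinders shrink geometrically in the $M_\eta$-scale (via the factor $a>2$) rather than linearly, so that the tail contribution is of lower order and can be swallowed by choosing $a$ large and $c'$ small. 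Third, translating "$W\le 1-\theta_0$ on a sub-cylinder" back through the normalization gives $\osc_{Q(r_{n+1})}w\le (1-\theta_0/2)b^{1-n}$, and choosing $b$ so that $1-\theta_0/2\le b^{-1}$, i.e. $b\le 1/(1-\theta_0/2)$, closes the induction. One checks $b\in(1,2)$ is achievable since $\theta_0$ is a fixed small constant.

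\textbf{Main obstacle.} The crux is the diminish-of-oscillation / De Giorgi step producing a constant $\theta_0$ that is uniform in $n$ despite two complications absent from the classical fractional case: (a) the dissipation is weaker than any fractional Laplacian — the lower bound on $\eta$ is only $r^{-d+\alpha}$-type for every $\alpha>0$, so the gain of integrability / the strength of the "good" term in the energy inequality degrades, and one must track that the relevant functional inequalities still close with the $M_\eta$-scaling rather than a clean power scaling; and (b) the drift $\tilde v$ is not bounded but only $M_\eta$-Lipschitz and sublinear (\ref{nov1514})–(\ref{nov1516}), so after rescaling it is small on the reference cylinder — this is good — but one must verify its smallness quantitatively using $|\tilde v(t,x)|\le C'|x|M_\eta(|x|)$ together with the cylinder's time-length $c'/M_\eta(r_n)$, so that $c'$ being small makes the drift a genuinely perturbative term uniformly in $n$. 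I expect the bulk of the work, and the place where assumptions (\ref{jan206}) and (\ref{aug608}) and the doubling property (\ref{aug1010}) are all invoked, to be in showing that the rescaled problem at every level $n$ is a uniform perturbation of a fixed "model" problem, so that a single application of the barrier/energy lemma yields the $n$-independent gain $\theta_0$.
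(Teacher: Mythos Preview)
Your inductive scheme, normalization $W=2b^{n-1}(w-c_n)$, measure dichotomy, and choice of $b$ via $1-\theta_0/2\le b^{-1}$ all match the paper's argument exactly, and the oscillation computation you sketch is precisely the one the paper carries out (with $b=2/(2-\theta)$). The paper's diminish-of-oscillation step is Lemma~\ref{pointest}, which is a barrier/comparison argument in the Silvestre style rather than an energy/level-set iteration; you list this as one of your two options, so there is no disagreement there.

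The one substantive difference is that you propose to \emph{rescale} $Q(r_n)$ to a fixed reference cylinder and then argue that the rescaled kernel and drift are uniform perturbations of a model problem. The paper does \emph{not} rescale: Lemma~\ref{pointest} is stated and proved directly for every radius $0<r<r_0$, and the key point is that the constants $c',\theta$ it produces are uniform in $r$. This is the cleaner route here precisely because $\eta$ has no exact scaling law --- so there is no ``model problem'' to perturb around --- and the paper's choice sidesteps what you correctly identify as your main obstacle. Working at the native scale, the drift bound $|v(t,x)|\le C'|x|M_\eta(|x|)$ and the time-length $c'/M_\eta(r)$ balance directly (this is where $c'$ small and $C_v$ large are chosen, see (\ref{nov2612})--(\ref{nov2614})), and the dissipative lower bound comes out of the kernel bounds (\ref{aug604}) without any rescaled comparison. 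Your rescaling route is not wrong, but it would force you to carry a one-parameter family of kernels $\eta_r(z)=r^d\eta(rz)/M_\eta(r)$ and check that the barrier lemma applies uniformly to all of them; the paper's formulation avoids this bookkeeping entirely.

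Your concern about the nonlocal tail --- that outside $Q(r_n)$ one only has $|\tilde w|\lesssim b^{n-k}$ on $Q(r_k)$ rather than $|\tilde w|\le 1$ --- is legitimate and is the reason the paper states the induction hypothesis for \emph{all} $1\le k\le n$, not just $k=n$. In the barrier argument of Lemma~\ref{pointest} the maximality of $w+\zeta\varrho_r$ is used at points $x_0+y$ with $y$ arbitrary, and strictly speaking one needs the growth of $\tilde w$ across the annuli $Q(r_k)\setminus Q(r_{k+1})$ to be beaten by the decay of $\eta$; this is the standard Silvestre-type tail estimate and is handled by taking $a$ large relative to $b$, exactly as you anticipate.
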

Lemma~\ref{lem-nov1504} is an immediate consequence of Lemma~\ref{lem-nov1502}.
As $r_n\to 0$, for any $(t,x) \in Q(r)$, there exists $n\in \N$ such 
that $(t,x)\in Q(r_n) \setminus Q(r_{n+1})$, so that 
\begin{align*}
\text{ either } \frac{c'}{M_\eta(r_{n+1})} < |t| \le 
\frac{c'}{M_\eta(r_{n})} \,\,\, \text{ or }\,\,\, r_{n+1}\le|x|<r_n.
\end{align*}
Thus we have
\begin{align*}
|w(t,x)| &= |w(t,x) -w(0,0)| \leq \osc_{Q(r_n)} w(t,x) \leq b^{1-n}\\
& = b\bigg(\frac{1}{a^n}\bigg)^{\frac{\log b}{\log a}} = b \bigg(
\frac{M_\eta(r)}{M_\eta(r_{n+1})}\bigg)^{\frac{\log b}{\log a}} \leq b [M_\eta(r)]^{\frac{\log b}{\log a}}  \bigg(\frac{|t|}{c'} +
\frac{1}{M_\eta(|x|)}\bigg)^{\frac{\log b}{\log a}},
\end{align*}
thus (\ref{anotherver}) holds with $\beta = \log b/\log a$, finishing the proof of
Lemma~\ref{lem-nov1504}. 

\subsubsection*{The proof of Lemma~\ref{lem-nov1502}}

We prove (\ref{osc}) by induction. For $n=1$, it holds automatically since
$|w(t,x)|\le 1/2$ for all~$(t,x)\in Q(r)$. 
Suppose that (\ref{osc}) holds for $1\le k\le n$ and set
\begin{align*}
m := \frac{1}{2}\big(\sup_{Q(r_n)} w(t,x) + \inf_{Q(r_n)} w(t,x)\big),
\end{align*}
so that the function
\begin{align*}
\tilde{w}(t,x):= 2b^{n-1}(w(t,x)-m),
\end{align*}
satisfies $|\tilde{w}(t,x)|\le 1$ for $(t,x)\in Q(r_n)$. 
%
It is convenient to set
\begin{equation}\label{nov2002}
\varphi(r) = M^{-1}_\eta(a^{-1}M_\eta(r)),
\end{equation}
so that $r_{n} = \varphi(r_{n+1})$, and a measure  
\begin{align*}
\mu(dy) = \frac{\eta(|y|)}{M_\eta(|y|)} dy.
\end{align*}
To proceed with the induction argument, 
we need to find $a>2$, so that
\begin{equation*}
\begin{aligned}
(-c'/M_\eta(r_n), -c'/M_\eta(r_{n+1})]& = 
(-ac'/M_\eta(r_{n+1}), -c'/M_\eta(r_{n+1})]\\
&\supset (-2c'/M_\eta(r_{n+1}), -c'/M_\eta(r_{n+1})],  
\end{aligned} 
\end{equation*} 
and $\theta \in (0,1)$, so that if~$|\tilde w(t,x)|\le 1$
on $Q(r_n)$, and
\begin{equation}\label{nov1530} 
\begin{aligned}
&\mu\Big((t,x)\in (-2c'/M_\eta(r_{n+1}), -c'/M_\eta(r_{n+1})]
\times [B(\varphi(r_{n+1}))\setminus B(r_{n+1})]:~\tilde{w}(t,x)\leq 0\Big) \\
&\geq \frac{1}{2} \mu \Big((-2c'/M_\eta(r_{n+1}), -c'/M_\eta(r_{n+1})]
\times [B(\varphi(r_{n+1}))\setminus B(r_{n+1})  ] \Big),
\end{aligned}
\end{equation}
then we have $\tilde{w} \leq 1-\theta$ on $Q(r_{n+1})$.
This requires the following lemma on lowering the maximum. 
\begin{lemma}\label{pointest}
Fix $0<r<r_0$, with $M_\eta(r_0) \geq 1$, and fix $\delta>0$. Assume that
$w(t,x)\leq 1$ for all~$(t,x)\in Q(\varphi(r))$ and the function
$w(t,x)$ satisfies 
\begin{align*}
\d_t w(t,x) + v(t,x) \cdot\nabla w(t,x) + \mathscr{L} w(t,x) = 0,~~(t,x)\in Q(\varphi(r)),
\end{align*}
with 
\begin{equation}\label{nov1554}
|v(t,x)| \leq C' |x|M_\eta(|x|)\hbox{ for $|x|\le 1$},
\end{equation}
 and the operator
$\mathscr{L}$ as in (\ref{aug602})-(\ref{aug608}).
There exists a constant $c'>0$ sufficiently small, and another constant $\theta\in(0,1)$, so that if
\begin{align}\label{nov1548}
\mu\bigg(\{w(t,x)\leq 0\}\cap \big((-2c'/M_\eta (r), -c'/M_\eta(r)] \times B(\varphi(r))\setminus B(r)\big)
 \bigg)\geq \delta >0,
\end{align} 
then
\begin{equation}\label{nov1550}
\hbox{$w\leq 1-\theta $ in $Q(r)$.}
\end{equation}
\end{lemma}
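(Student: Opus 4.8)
The plan is to prove Lemma~\ref{pointest} by a De Giorgi iteration on truncations of $w$, exploiting the nonlocal dissipation to gain decay from the hypothesis (\ref{nov1548}) that $w$ is already ``below $0$'' on a set of positive $\mu$-measure in an intermediate annular region. Concretely, I would set up a sequence of levels $\ell_k = 1 - 2^{-k}(1-\theta)$ increasing to the target $1-\theta$ (with $\theta$ small to be fixed), a shrinking sequence of radii $\rho_k \downarrow r$ and times $T_k \downarrow c'/M_\eta(r)$ interpolating inside $Q(\varphi(r))$ towards $Q(r)$, and the truncated energies $U_k = (w-\ell_k)_+$ measured on the cylinder $Q_k = (-T_k/M_\eta(r),0]\times B(\rho_k)$. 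The goal is a nonlinear recursion $E_{k+1}\le C\,\Lambda^k E_k^{1+\sigma}$ for a suitable energy quantity $E_k$ built from $U_k$, which forces $E_k\to 0$ provided $E_0$ is small, and the initial smallness of $E_0$ is exactly what (\ref{nov1548}) buys us via a De Giorgi isoperimetric/measure-shrinking argument.

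The key steps, in order. First I would derive the energy inequality: test the equation against $U_k = (w-\ell_k)_+$ (in the weak sense, after the usual mollification), integrate over $Q_k$, and handle the three terms. The time derivative gives $\sup_t\int U_k^2\,dx$ plus the relevant initial contribution; the drift term $v\cdot\nabla w$ is controlled using (\ref{nov1554}) — since $|v(t,x)|\le C'|x|M_\eta(|x|)$ and $|x|\le \varphi(r)$ is small, the drift is $O(\varphi(r)M_\eta(\varphi(r)))$ on the relevant ball, and one absorbs it against the dissipation with the help of the scaling relation $r M_\eta(r)$ being monotone (inequality (\ref{aug608})); the nonlocal term produces the good Gagliardo-type form $\iint (U_k(x)-U_k(y))^2 \eta(|x-y|)\,dx\,dy$ plus a nonlocal ``tail'' term, which is where the lower bound $k(x,z,t)\ge C^{-1}\eta(|z|)$ in (\ref{aug604}) is essential. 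Second, I would invoke the fractional Sobolev / Poincaré inequality associated to the kernel $\eta$ — the hypotheses (\ref{aug606}), (\ref{jan206}), (\ref{aug608}) guarantee that $\eta$ dominates (up to constants) a genuine fractional-Laplacian kernel $|z|^{-d-\alpha}$ for any small $\alpha$, so one has a Sobolev embedding with a gain of integrability exponent $\frac{2d}{d-\alpha}$ — to convert the energy bound into an $L^{2+\epsilon}$ bound on $U_k$ over $Q_k$, which supplies the $E_k^{1+\sigma}$ nonlinearity after Hölder's inequality on the level set $\{U_k>0\}\supset\{U_{k+1}>0\}$, using $|U_k - U_{k+1}|\gtrsim 2^{-k}$ on the latter. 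Third, the measure-theoretic input: hypothesis (\ref{nov1548}) says $\{w\le 0\}$ has $\mu$-measure $\ge\delta$ in the annular region; since $\mu(dy)=\frac{\eta(|y|)}{M_\eta(|y|)}dy$ and on the annulus $B(\varphi(r))\setminus B(r)$ this weight is comparable to Lebesgue measure times $M_\eta(r)^{-1}$ up to constants (again using the monotonicity of $r^d\eta(r)/M_\eta(r)$), this translates to a lower bound on the Lebesgue density of $\{w\le 0\}$, which by a De Giorgi isoperimetric lemma (in the nonlocal setting, adapted to the kernel $\eta$ — this is where I'd borrow the structure from \cite{kassmann2017intrinsic}) forces the level sets $\{w>\ell_k\}$ to shrink geometrically at the first step, making $E_0$ as small as we need after fixing the number of iterations; running the recursion then yields $E_\infty=0$, i.e.\ $w\le 1-\theta$ a.e.\ on $Q(r)$, which is (\ref{nov1550}) after the standard continuity upgrade.

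I would fix the constants in the order: choose $\alpha$ small (for the Sobolev gain), then choose $c'$ small so that the time-length of $Q(\varphi(r))$ relative to $Q(r)$ gives enough room for the interpolating cylinders and so that the drift term is genuinely subordinate to the dissipation on the small ball $B(\varphi(r))$, then choose the number of De Giorgi steps large enough (depending on $\delta$) to make $E_0$ sufficiently small, which finally determines $\theta$. A subtle point to flag: all the cylinders $Q(\varphi(r))$, $Q(r)$ are defined through $M_\eta$, not through a clean power scaling, so the usual self-similar rescaling that normalizes a De Giorgi argument is unavailable; instead every inequality must be tracked with explicit $M_\eta(r)$-dependence and one leans repeatedly on the doubling property (\ref{aug1010}) of $M_\eta$ and the monotonicity relations (\ref{jan206})--(\ref{aug608}) to keep all constants independent of $r$ (and of $(t_0,x_0)$). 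The main obstacle I expect is precisely this: making the nonlocal energy inequality and the isoperimetric lemma scale-free in the $M_\eta$-parabolic geometry — in particular controlling the nonlocal tail term (contributions to $\mathscr{L}w(x)$ from $|z|$ comparable to or larger than $\varphi(r)$, where $w$ is only known to be $\le 1$, not small) uniformly in $r$, since a crude bound there would destroy the recursion. This is handled by splitting the tail at scale $\varphi(r)$, using that $\int_{|z|>\varphi(r)}\eta(|z|)|z|^{d-1}dz = M_\eta(\varphi(r)) = a^{-1}M_\eta(r)$ is only a fixed multiple away from $M_\eta(r)$, and absorbing the resulting term by choosing $a$ (equivalently the annulus width) appropriately.
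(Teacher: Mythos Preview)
Your proposal takes a genuinely different route from the paper, and it contains a concrete gap that would prevent the argument from closing.

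The paper does \emph{not} run a De Giorgi energy iteration. It proves Lemma~\ref{pointest} by a pointwise barrier/comparison argument in the style of Silvestre~\cite{silvestre2012holder}: one sets up a supersolution $1-\zeta(t)\varrho_r(t,x)$, where $\varrho_r$ is a travelling bump and $\zeta$ solves an ODE driven by the $\mu$-measure $\mu(t)$ of the ``good'' set $\{w\le 0\}$ at time $t$. If the inequality $w\le 1-\zeta\varrho_r$ fails, one looks at the maximum of $w+\zeta\varrho_r$; at that point the drift term becomes $-\zeta\,v\cdot\nabla\varrho_r$ and is controlled pointwise using (\ref{nov1554}) and the monotonicity (\ref{aug608}), while $\mathscr{L}w$ is bounded below by splitting the integral into a barrier piece $J_1$ (estimated via (\ref{jan206})--(\ref{aug1008})) and a positive ``good set'' piece $J_2\gtrsim M_\eta(r)\mu(t_0)$ coming from (\ref{nov1548}). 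The contradiction is then read off from the ODE for $\zeta$. No $L^2$ estimates, no Sobolev embedding, no recursion.

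The gap in your plan is the Sobolev step. You write that ``$\eta$ dominates (up to constants) a genuine fractional-Laplacian kernel $|z|^{-d-\alpha}$ for any small $\alpha$,'' but the hypothesis is the opposite: the only lower bound available is $\eta(r)\ge c_\alpha^{-1}r^{-(d-\alpha)}$, which is \emph{weaker} than $r^{-d}$ and a fortiori weaker than any fractional Laplacian kernel $r^{-d-2s}$. That is the whole point of the ``mildly singular'' setting --- think of $\eta(r)\sim r^{-d}(\log(1/r))^{-2}$, for which $M_\eta$ blows up only logarithmically. With such $\eta$, the Gagliardo form $\iint (U_k(x)-U_k(y))^2\eta(|x-y|)\,dx\,dy$ gives no $L^{2+\epsilon}$ gain, so the nonlinear recursion $E_{k+1}\le C\Lambda^k E_k^{1+\sigma}$ cannot be produced by the usual Sobolev/H\"older mechanism. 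The reference you invoke, \cite{kassmann2017intrinsic}, addresses exactly this difficulty in the elliptic, drift-free case by replacing the standard Sobolev inequality with an intrinsic one built from $M_\eta$; that machinery is substantially more delicate than what you sketch, and extending it to accommodate the $M_\eta$-Lipschitz drift (for which you have no control on $\operatorname{div} v$, so $\int v\cdot\nabla U_k\,U_k$ is not obviously absorbable) is precisely the obstacle the paper sidesteps by going pointwise.
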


The conclusion of Lemma~\ref{lem-nov1502} follows from Lemma \ref{pointest}.
Indeed, this lemma implies that if (\ref{nov1530}) holds, then
we have $\tilde{w}(t,x)\leq 1-\theta$ on $Q(r_{n+1})$, so that
\begin{align*}
w(t,x) \leq \frac{1-\theta}{2} b^{1-n} + m \,\,\, \text{ for } (t,x)\in Q(r_{n+1}).
\end{align*}
As $\inf_{Q(r_{n+1})} w(t,x) \geq \inf_{Q(r_{n})} w(t,x) $, then
the oscillation of $w(t,x)$ on $Q(r_{n+1})$ is bounded~by
\begin{align*}
\osc_{Q(r_{n+1})} &w(t,x) \leq \frac{1-\theta}{2} b^{1-n} + m-\inf_{Q(r_{n+1})} w(t,x)\\
&\leq \frac{1-\theta}{2} b^{1-n} + m - \inf_{Q(r_{n})} w(t,x)= \frac{1-\theta}{2} b^{1-n} + \frac{1}{2}(\sup_{Q(r_n)} w(t,x) - \inf_{Q(r_n)}w(t,x)) \\
&=  \frac{1-\theta}{2} b^{1-n} + \frac{1}{2}\osc_{Q(r_n)} w(t,x) \leq
\frac{1-\theta}{2} b^{1-n} + \frac{1}{2}b^{1-n}
= \frac{2-\theta}{2} b^{1-n} = b^{-n},
\end{align*}
if we choose 
\begin{equation}\label{nov1532}
b= \frac{2}{2-\theta}\in(1,2).
\end{equation}
We used the induction assumption (\ref{osc})
in the last inequality above. If (\ref{nov1530}) does not hold, then we have 
\begin{align*}
\mu\bigg(\{\tilde{w}(t,x)> 0\}\cap & \big((-2c'/M_\eta (r_{n+1}), -c'/M_\eta(r_{n+1})] \times [B(\varphi(r_{n+1}))\setminus B(r_{n+1})]\big) \bigg) \\
&\geq \frac{1}{2} \mu \big((-2c'/M_\eta (r_{n+1}), -c'/M_\eta(r_{n+1})] \times [B(\varphi(r_{n+1}))\setminus B(r_{n+1})]\big),
\end{align*}
and we can repeat the argument above for $-\tilde{w}(t,x)$. This finishes the proof
of Lemma~\ref{lem-nov1502}. 

\subsubsection*{Proof of Lemma \ref{pointest}}

Let $\varrho(r)$, $r>0$, be a radially smooth non-increasing function such 
that $\varrho(r)>0$ for $0\le r<2$, with~$\varrho(r) =1 $ for $0\le r\le 1$, and~$\varrho(r) =0$ for $r\geq 2$. 
We set 
\[
\varrho_r(t,x) = \varrho\Big(\frac{|x|+C_v  r M_\eta(r) t}{r}\Big) =
\varrho\Big(\frac{|x|-C_v r M_\eta(r) |t|}{r}\Big),
\]
with a large constant $C_v$ to be chosen later.
Condition (\ref{aug608}) implies that
\begin{equation*}
\frac{M_\eta(2r+2C_v c' r  )}{M_\eta(r)}\ge \frac{1}{2(1+C_v c' )}\ge  \frac{1}{a} 
\end{equation*}
if we choose $a$ sufficiently large. 
As
\[
M_\eta(\varphi(r)) = a^{-1}M_\eta(r),
\]
it follows that 
\[
\varphi(r) \ge 2r+ C_v r M_\eta(r)|t|~~
\hbox{for $- 2c'/M_\eta(r)\le t\le 0$}.
\] 
We also set
\begin{align}\label{nov1544}
\mu(t)=\mu \big(\{w(t,x)\leq 0\}\cap (B(\varphi(r))\setminus B(r)) \big),
\end{align}
and
\begin{align}\label{nov1545}
\bar\mu_r=\mu\bigg(\{w\leq 0\}\cap \big((-2c'/M_\eta (r), -c'/M_\eta(r)] \times B(\varphi(r))\setminus B(r)
\big ) \bigg)\ge\delta,
\end{align}
by (\ref{nov1548}). 

Our goal will be to show that 
\begin{equation}\label{nov1540}
w(t,x)\le 1-\zeta(t)\varrho_r(t,x) \hbox{ in 
$Q(r)$}.
\end{equation}
The function $\zeta(t)$ in (\ref{nov1540}) obeys the ODE
\begin{align}\label{nov1542}
\zeta'(t) = \sigma \mu(t)
- C_1 M_\eta(r) \zeta(t),~~ -2c'/M_\eta(r)\le t\le  0,
\end{align}
with the initial condition
\[
\zeta\Big(-\frac{2c'}{M_\eta(r)}\Big) = 0.
\]
To ensure that $\zeta(t)$ is $C^1$ on 
$(-ac'/M_\eta(r), 0]$, we extend it to $t\le -2c'/M_\eta(r)$ so that 
\[
\lim_{t\to (-2c'/M_\eta(r))^-}\zeta'(t) = \sigma \mu(-2c'/M_\eta(r)),
\]
and $\zeta(t)\le 0$ for $t\le -2c'/M_\eta(r)$. The solution to (\ref{nov1542}) is
\begin{align*}
\zeta(t) = \int_{-2c'/M_\eta(r)}^t  \sigma
\mu(s)
e^{-C_1 M_\eta(r) (t-s)} ds.
\end{align*}
Hence,  we have a lower bound
\begin{align}\label{nov1546}
\zeta(t)& \geq \sigma e^{-2 c' C_1 } \int _{-2c'/M_\eta(r)}^t \mu(s)ds\ge 
\sigma e^{-2 c' C_1 } \bar\mu_r
\geq \sigma e^{-2c'C_1} \delta,
\end{align}
for all $t\in [-c'/M_\eta(r),0]$. 
Going back to (\ref{nov1540}), it follows that
\begin{equation}\label{nov1552}
w(t,x)\le 1-\sigma e^{-2 c' C_1 } \delta\hbox{ in $Q(r)$},
\end{equation}
thus (\ref{nov1550}) holds with $\theta = \sigma e^{-2c' C_1 } \delta$ if we choose
a small $\sigma$ and a sufficiently large $C_1$.

The proof of (\ref{nov1540}) is by contradiction. Suppose that 
\begin{equation}\label{nov2004}
w(t,x) > 1- \zeta(t)\varrho_r(t,x)
\end{equation}
at some point $(t,x)\in Q(r)$. Let $(t_0,x_0)$ be the maximal point of the function 
\[
w(t,x) + \zeta(t)\varrho_r(t,x),
\] 
so that, in particular, 
\begin{equation}\label{nov2004bis}
w(t_0,x_0)+\zeta(t_0)\varrho_r(t_0,x_0) >1.
\end{equation}
As a consequence of (\ref{nov2004bis}) and the assumption that
$w(t,x)\le 1$, $t_0$ must be in the time interval where $\zeta(t)>0$ and $(t_0, x_0)$ must be in the support of $\varrho_r$, hence $t_0\in (-2c'/M_\eta(r), 0]$ and 
\begin{equation}\label{nov2006}
|x_0| <2r(1+C_v c' ).
\end{equation}

Thus, at $(t_0, x_0)$ we have
\begin{align*}
& \d_t w(t_0,x_0) + \zeta'(t_0) \varrho_r(t_0,x_0) + \zeta(t_0) \d_t\varrho_r(t_0,x_0) \geq 0, \\
& \nabla w(t_0,x_0) +\zeta(t_0) \nabla \varrho_r(t_0,x_0) = 0.
\end{align*}
This gives a lower bound
\begin{equation}
\begin{aligned}\label{contraest}
0 &= \d_t w(t_0,x_0) + v(t_0,x_0) \cdot\nabla w(t_0,x_0) + \mathscr{L} w(t_0,x_0) \\
& \geq -\zeta'(t_0) \varrho_r(t_0,x_0) + C_v r M_\eta(r)\zeta(t_0) |\nabla \varrho_r(t_0,x_0)| - \zeta(t_0) v(t_0,x_0) \cdot
  \nabla \varrho_r(t_0, x_0)+\mathscr{L} w(t_0,x_0)\\
  &\geq  -\zeta'(t_0) \varrho_r(t_0, x_0)+ \big(C_v r M_\eta(r)-C'|x_0|M_\eta(|x_0|)\big)\zeta(t_0) |\nabla \varrho_r(t_0,x_0)| +\mathscr{L} w(t_0,x_0).
\end{aligned}
\end{equation}
When $|x_0|<r$, then $(t_0,x_0)$ satisfies
\begin{align}
\Big| |x_0| - C_v r M_\eta(r) |t_0|\Big| <r,
\end{align}
given $C_vr M_\eta(r) |t_0| < r$, which requires
\begin{align}\label{nov2614}
C_v c' <1/2.
\end{align}
Condition (\ref{nov2614})   holds if we pick $c'$ small. With that, whenever $|x_0| <r$, $\varrho_r(t_0,x_0) = 1$ and thus the term in (\ref{contraest}) with $\nabla \varrho_r(t_0,x_0) = 0 $ disappears. So we may consider the more difficult case $|x_0|\ge r$. Combining (\ref{nov2006}) and $M_\eta(|x_0|) \le M_\eta(r)$, we have
\[
|x_0| M_\eta(|x_0| ) \le 2r(1+C_v c' ) M_\eta(r).
\]
Therefore, it gives
\begin{equation*}
\begin{aligned}
&C_v r M_\eta(r)-C'|x_0|M_\eta(|x_0|)\ge
C_vrM_\eta(r) - 2rC'{(1+C_v c')}  M_\eta(r ) \\
&=C_vr M_\eta(r)\Big[1- \farc{2C' (1+C_v c')}{C_v}\Big]\ge 0,
\end{aligned}
\end{equation*}
if we take $C_v\ge 4C'$ and  $c' < 1/(8C')$, so that
\begin{equation}\label{nov2612}
\farc{2C' (1+C_v c')}{C_v}=  \farc{2C'}{C_v}+{2C'c'}<1.
\end{equation}
Note that we can ensure that (\ref{nov2612}) holds while keeping (\ref{nov2614}) intact, if we first choose $C_v$ large and then $c'$ small.

Now, no matter where $x_0$ locates, (\ref{contraest}) becomes
\begin{equation}
\begin{aligned}\label{contraest-bis}
0 &= \d_t w(t_0,x_0) + v(t_0,x_0) \cdot\nabla w(t_0,x_0) + \mathscr{L} w(t_0,x_0) 
\geq -\zeta'(t_0) \varrho_r(t_0, x_0)+\mathscr{L} w(t_0,x_0).
\end{aligned}
\end{equation}
To get a contradiction, we will need a lower bound on $\mathscr{L} w(t_0,x_0)$ in the right side of (\ref{contraest-bis}).
First, we write
\[
\begin{aligned}
&\mathscr{L} \varrho_r(t,x)=\int_{\Rm^d}[\varrho_r(t,x)-\varrho_r(t,x+z)]k(x,z,t)dz\\
&=\int_{\Rm^d}\Big[\varrho\Big(\frac{|x|-C_v r M_\eta(r) |t|}{r}\Big)- 
\varrho\Big(\frac{|x+z|-C_v r M_\eta(r) |t|}{r}\Big)\Big]k(x,z,t)dz,
\end{aligned}
\]
hence
\[
\begin{aligned}
&\mathscr{L} \varrho_r(t,ry)=\int_{\Rm^d}[\varrho_r(t,ry)-\varrho_r(t,ry+z)]k(ry,z,t)dz\\
&=r^{-d}\int_{\Rm^d}\Big[\varrho\Big(\frac{|ry|-C_v r M_\eta(r) |t|}{r}\Big)- 
\varrho\Big(\frac{|ry+rz|-C_v r M_\eta(r) |t|}{r}\Big)\Big]k(ry,rz,t)dz,
\\
&=r^{-d}\int_{\Rm^d}\Big[\varrho(|y|-C_v M_\eta(r) |t|)- 
\varrho(|y+z|-C_v  M_\eta(r) |t|)\Big]k(ry,rz,t)dz.
\end{aligned}
\]
Let us set $\tilde \varrho(t,x)=\varrho(|x|-C_v  M_\eta(r) |t|)$, so that 
\begin{equation}\label{nov2616}
\begin{aligned}
&\mathscr{L} \varrho_r(t,ry)&=r^{-d}\int_{\Rm^d}\Big[\tilde\varrho(t,|y|)- 
\tilde\varrho(t,|y+z|)\Big]k(ry,rz,t)dz.
\end{aligned}
\end{equation}
Note that if $\varrho_r(t,ry)=0$, then $\tilde\varrho(t,y)=0$, and (\ref{nov2616}) becomes
\begin{equation}\label{nov2618}
\begin{aligned}
&\mathscr{L} \varrho_r(t,ry)=-r^{-d}\int_{\Rm^d}
\tilde\varrho(t,|y+z|)k(ry,rz,t)dz\le
-Cr^{-d}\int_{\Rm^d}
\tilde\varrho(t,|y+z|)\eta(rz)dz \\
& \le -Cr^{-d}\int_{\Rm^d}
\tilde\varrho(t,|y+z|)\frac{1}{c_{\alpha} |rz|^{d-\alpha}}dz = -\frac{C}{c_{\alpha}} r^{-2d+\alpha}
\int_{\Rm^d}\varrho(|y+z|-C_v  M_\eta(r) |t|) |z|^{\alpha-d} dz.
\end{aligned}
\end{equation}
 {Due to $|t|<2c'/M_\eta(r)$ and (\ref{nov2614}), the time dependent shift  in $\varrho$ is of $O(1)$. Because $\tilde{\varrho}$ is compactly supported, we may pick sufficiently large $a$ to make $|rz|<1$ guaranteed, so that we can apply the lower bound of $\eta$ to factor $r$ out from the integral. Observe that when~$t=0$ and $|y|=2$, the right side of (\ref{nov2618}) is strictly negative. It follows that there exists a universal constant $c_1>0$ so that we still have  
\begin{equation}\label{nov2620}
\hbox{$\mathscr{L} \varrho_r (t,x)\le 0$ provided that
$\varrho_r(t,x)\le c_1$. }
\end{equation}
}
We will consider two cases for a lower bound on $\mathscr{L} w(t_0,x_0)$: 
if $\varrho_r (t_0,x_0)> c_1$, then we will show that
\begin{align}\label{nov1564}
\mathscr{L} w(t_0,x_0) \geq -C\zeta(t_0) M_\eta(r) + \sigma M_\eta(r)\mu(t_0).
\end{align}
Using this estimate, together with the ODE (\ref{nov1542}) for $\zeta'(t)$  in (\ref{contraest-bis}) gives
\begin{align}\label{nov1566}
0 \geq \sigma \mu(t_0)
\big(M_\eta(r)-\varrho_r(t_0,x_0)\big) + M_\eta(r) \zeta(t_0) \big(C_1\varrho_r(t_0, x_0)-C\big) .
\end{align}
The condition 
\begin{align}\label{nov1841}
M_\eta(r)\geq M_\eta(r_0)\geq 1
\end{align}
in Lemma~\ref{pointest} 
ensures that the first term in the right side of (\ref{nov1566}) 
is non-negative. Moreover, since $\varrho_r(t_0,x_0)> c_1$ and $\zeta(t_0)>0$, we get a contradiction if we choose~$C_1$ 
in (\ref{nov1542}) large enough. 
 
When $\varrho_r(t_0,x_0)\leq c_1$, we will obtain the following lower bound for 
$\mathscr{L} w(t_0,x_0)$: 
\begin{align}\label{nov1843}
    \mathscr{L}w(t_0,x_0) \geq  \sigma M_\eta(r)\mu(t_0).
\end{align}
Again, using this estimate, together with  (\ref{nov1542}) in (\ref{contraest-bis}) gives
\begin{align}
   0 \geq \sigma \mu(t_0) \big(M_\eta(r)-\varrho_r(t_0,x_0)\big) + M_\eta(r) \zeta(t_0) C_1\varrho_r(t_0, x_0).
\end{align}
This is a contradiction to  (\ref{nov1841}) and $\varrho_r(t_0,x_0)<1$  for any $C_1\geq 0$.

It remains to show that estimates (\ref{nov1564}) and  (\ref{nov1843}) hold in their domains of validity. 
We will drop $t_0$ in $\varrho_r(t_0,x_0)$ as it does not affect the following computation.
Since the function~$w+\zeta \varrho_r$ obtains its maximum at $x_0$, we have 
\[
w(t_0, x_0) - w(t_0, x_0+y) \geq - \zeta(t_0) (\varrho_r (x_0) - \varrho_r (x_0+y)),
\]
for all $y\in\Rm^d$.  Note that if
$w(t_0,x_0+z) \leq 0$, then, because of (\ref{nov2004bis}), we have 
\begin{align}\label{nov1570}
w(t_0,x_0) + \zeta(t_0) \varrho_r(x_0) - w(t_0,x_0+z) - 
\zeta(t_0) \varrho_r(x_0+z) \geq 1-\zeta(t_0) \geq \frac{1}{2},
\end{align}
if we choose $c_0$ in (\ref{nov1542}) to be sufficiently small. Let us
introduce the ``good" set 
\[
G:= \{w\leq 0\}\cap (B(\varphi(r))\setminus B(r)),
\] 
and write
\begin{equation}\label{nov1588}
\begin{aligned}
\mathscr{L} w(t_0,x_0) &= \int_{\R^d} ( w(t_0,x_0) -w(t_0,x_0+y))k(x_0,y,t_0) dy \\
&=\int_{\R^d} ( w(t_0,x_0)+\zeta(t_0)[\varrho_r(x_0)-\varrho_r(x_0+y)] -w(t_0,x_0+y))k(x_0,y,t_0) dy\\
&-\zeta(t_0)\int_{\R^d}[\varrho_r(x_0)-\varrho_r(x_0+y)]k(x_0,y,t_0) dy
\\
&=\int_{x_0+y\in G} ( w(t_0,x_0)+\zeta(t_0)\varrho_r(x_0)-\zeta(t_0)\varrho_r(x_0+y) -w(t_0,x_0+y))k(x_0,y,t_0) dy\\
&+\int_{x_0+y\not\in G} ( w(t_0,x_0)+\zeta(t_0)\varrho_r(x_0)-\zeta(t_0)\varrho_r(x_0+y) -w(t_0,x_0+y))k(x_0,y,t_0) dy\\
&-\zeta(t_0)\int_{\R^d}[\varrho_r(x_0)-\varrho_r(x_0+y)]k(x_0,y,t_0) dy\\
&\geq -\zeta(t_0)\int_{\R^d} (\varrho_r(x_0) - \varrho_r (x_0+y)) k(x_0,y,t_0) dy\\
&+\int_{x_0+y\in G} (w(t_0,x_0) + \zeta(t_0) \varrho_r(x_0) - w(t_0,x_0+y) - 
\zeta(t_0) \varrho_r(x_0+y) )k(x_0,y,t_0) dy\\
&\geq -\zeta(t_0)\int_{\R^d} [\varrho_r(x_0) - \varrho_r(x_0+y)]k(x_0,y,t_0) dy + 
\frac{1}{2C} \int_{x_0+y\in G} \eta(y) dy =: J_{1} + J_{2}.
\end{aligned}
\end{equation}
We used (\ref{nov1570}) in the last two steps above.
To bound $J_1$, when $\varrho_r(t_0,x_0)>c_1$ we write
\begin{equation}\label{nov1574}
\begin{aligned}
J_{1} &\geq -C\zeta(t_0) \bigg(  \int_{|y|\leq r} 
\|\nabla \varrho_r\|_{L^{\infty}} |y| \eta(y) dy  + 
2\|\varrho_r\|_{L^{\infty}}\int_{|y|\geq r} \eta(y) dy\bigg)\\
&\geq -C\zeta(t_0)\bigg(\frac{1}{r}  \int_{|y|\leq r} |y| \eta(y) dy + M_\eta(r) \bigg) \geq -C\zeta(t_0) M_\eta(r),
\end{aligned}
\end{equation}
because assumption (\ref{jan206}) implies that there exists $C$ such that 
\begin{equation}\label{jan204}
\hbox{$|y|^d\eta(y) \le C M_\eta(|y|)$ for all $0\le |y|\le r\le r_0$,}
\end{equation}
and we can take $\gamma\in (0,1/2]$ as in the the assumption (\ref{aug1008}) to have $|y|^{\gamma } M_\eta(|y|)$ be an increasing function for $0\le|y|<r_0$, so that the first term in the last line satisfies
\begin{equation}\label{nov2813}
\begin{aligned}
    \int_{|y|\leq r} |y| \eta(y) dy &\le C \int_{|y|\leq r} \frac{M_\eta(|y|)}{|y|^{d-1}} dy = C\int_{|y|\leq r} \frac{|y|^{\gamma } M_\eta(|y|)}{|y|^{\gamma +d -1}} dy \\
    &\le C r^{\gamma } M_\eta(r) \int_{|y|\leq r} \frac{1}{|y|^{\gamma +d -1}}dy = C rM_\eta(r).
\end{aligned}
\end{equation}
When $\varrho_r(t_0,x_0)\leq c_1$, then we simply have $J_1 \geq 0$ because of (\ref{nov2620}). For a bound on $J_{2}$, note that $|x_0| <2r(1+C_v c') < 3r$, and we use the inequality
\[
|y| \le |x_0+y| +|x_0| \le |x_0+y| + 3r  \le 4|x_0+y|
\]
for $|x_0+y|\geq r$. Assumption (\ref{aug608}) gives 
\[
\eta(y) \geq \eta(4|x_0+y|) \geq C \eta(|x_0+y|),
\]
thus  
\begin{equation}\label{nov1590}
\begin{aligned}
J_{2} &\geq C\int_{x_0+y\in G} \eta(x_0+y) dy = C\int_{y\in G} \eta(y) dy = C\int_{y\in G} M_\eta(|y|) \mu(dy) \\
&\geq CM_\eta(\varphi(r)) \mu(G) = \frac{C}{a} M_\eta(r) \mu(t)
\geq \sigma M_\eta(r)\mu(t),
\end{aligned}
\end{equation}
by choosing $\sigma$ small so that $\sigma \leq C/a$. The above estimates for $J_1$ and $J_2$ lead to (\ref{nov1564}) and~(\ref{nov1843})
in their respective cases.

It is straightforward to extend Proposition~\ref{Mholder} to equations with a forcing in the following way.
 \begin{lemma}\label{lem-jan202}
Under the above assumptions, solutions of 
\begin{align}\label{aug1042}
\d_t q + v(t,x)\cdot\nabla q + \mathscr{L} q = f(t,x),
\end{align}
with a uniformly bounded function $f(t,x)$ satisfy 
\begin{align}\label{aug1044}
|q(t,x) - q(t,y)|\le C t^{-\beta} [M_\eta(|x-y|)]^{-\beta}\Big[\sup_{0\le s\le 1,x\in\R^d}|q(s,x)|+t\sup_{0\le s\le 1,x\in\R^d}|f(s,x)|\Big].
\end{align}
\end{lemma}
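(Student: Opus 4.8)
The plan is to reduce the forced equation \eqref{aug1042} to the homogeneous case \eqref{homogmain} via Duhamel's principle, or more precisely via a direct comparison argument with an auxiliary function. First I would fix $(t_0,x_0)$ and set $F_0:=\sup_{0\le s\le 1,x\in\R^d}|f(s,x)|$ and $Q_0:=\sup_{0\le s\le 1,x\in\R^d}|q(s,x)|$. The idea is that the forcing contributes at most linearly in time to the sup norm of $q$ on the relevant time slab: if $p(t,x)$ solves \eqref{homogmain} with the same drift and operator and with $p(t_0,\cdot)=q(t_0,\cdot)$ started at some earlier time, then $|q-p|$ is controlled by the forcing through the maximum principle for the operator $\d_t+v\cdot\nabla+\mathscr L$. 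Indeed, both $+(s-s_*)F_0\pm(q-p)$ are supersolutions of the homogeneous equation (since $\mathscr L$ annihilates constants and $\d_t$ of a linear-in-time function is $F_0$), so on a time interval of length $\le t_0\le 1$ we get $\|q-p\|_{L^\infty}\le t_0 F_0$ on that slab.

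Next I would apply Proposition~\ref{holderthm} (equivalently Lemma~\ref{lem-nov1504}) to $p$, which is a genuine solution of the homogeneous equation, to get
\begin{equation*}
|p(t,x)-p(t,y)|\le K t^{-\beta}[M_\eta(|x-y|)]^{-\beta}\sup_{0\le s\le 1,x\in\R^d}|p(s,x)|,
\end{equation*}
for $|x-y|\le\min(r_0,M^{-1}_\eta(c'/t))$, and then bound $\sup|p|\le \sup|q|+t F_0\le Q_0+F_0$. Combining with the trivial estimate $|q(t,x)-q(t,y)|\le|p(t,x)-p(t,y)|+2\|q-p\|_{L^\infty}\le |p(t,x)-p(t,y)|+2tF_0$, and absorbing the $2tF_0$ term into the right-hand side of \eqref{aug1044} using that $t^{-\beta}[M_\eta(|x-y|)]^{-\beta}\ge c$ on the admissible range of $|x-y|$ (since there $M_\eta(|x-y|)\le c'/t$ forces $t^{-\beta}[M_\eta(|x-y|)]^{-\beta}\ge (c')^{-\beta}$), yields \eqref{aug1044} with a possibly larger constant $C$.

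The one subtlety I anticipate — and the step that needs care — is the comparison argument itself: strictly speaking one should re-run the De Giorgi scheme of Lemma~\ref{lem-nov1502} with the forcing term present, checking that a bounded $f$ only perturbs the ODE \eqref{nov1542} for $\zeta(t)$ by an $O(F_0)$ additive term on a time slab of length $O(1/M_\eta(r))$, hence changes $\zeta$ (and the oscillation decay) by at most $O(F_0/M_\eta(r))$, which is summable against the geometric sequence $b^{1-n}$ and so does not destroy the contraction. Equivalently — and this is the cleaner route I would actually write — one observes that if $q$ solves \eqref{aug1042} then $q(t,x)-(t-t_*)F_0$ and $q(t,x)+(t-t_*)F_0$ are respectively a subsolution and a supersolution of the homogeneous equation on any slab starting at $t_*$, so the De Giorgi oscillation bound from Lemma~\ref{lem-nov1504} applies to these shifted functions, and since over the slab the shift $(t-t_*)F_0$ has oscillation at most $\frac{c'}{M_\eta(r)}F_0$, the oscillation of $q$ itself on $Q(r_n)$ is bounded by $b^{1-n}\cdot 2\|q\|_{L^\infty}+C F_0/M_\eta(r_n)\le C[M_\eta(r)]^\beta\big(\tfrac{|t|}{c'}+\tfrac1{M_\eta(|x|)}\big)^\beta(Q_0+F_0)$ after the same bookkeeping as before; un-centering as in \eqref{nov1520}--\eqref{nov1524} then gives \eqref{aug1044}. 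The main obstacle is thus purely bookkeeping: tracking the additive forcing perturbation through the iteration and verifying it is dominated by the modulus on the stated range of $|x-y|$; there is no new analytic difficulty beyond what is already in the proof of Lemma~\ref{pointest}.
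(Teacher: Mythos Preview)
Your first approach --- solve the homogeneous problem for $p$ with the same data, control $q-p$ in $L^\infty$ by $tF_0$, then absorb --- has a genuine gap in the absorption step. On the admissible range $|x-y|\le M_\eta^{-1}(c'/t)$ one has, since $M_\eta$ is decreasing, $M_\eta(|x-y|)\ge c'/t$, not $\le$. Hence $t^{-\beta}[M_\eta(|x-y|)]^{-\beta}\le (c')^{-\beta}$ is an \emph{upper} bound, and the quantity $t^{-\beta}[M_\eta(|x-y|)]^{-\beta}$ has no positive lower bound as $|x-y|\to 0$. So the term $2tF_0$ cannot be absorbed into $Ct^{-\beta}[M_\eta(|x-y|)]^{-\beta}\cdot tF_0$: the latter vanishes as $|x-y|\to 0$ while the former does not. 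The underlying issue is structural: an $L^\infty$ bound on $q-p$ carries no modulus-of-continuity information, and one use of the homogeneous estimate cannot manufacture it.

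The paper takes a different and more direct route. It writes the Green's function $\mathcal K$ for the homogeneous evolution, reads off from Proposition~\ref{holderthm} that
\[
\sup_{|x_1-x_2|\le r_0,\,0<t\le 1}\int_{\R^d}\frac{|\mathcal K(t,x_1-y)-\mathcal K(t,x_2-y)|\,t^{\beta}}{[M_\eta(|x_1-x_2|)]^{-\beta}}\,dy\le C,
\]
and then applies this \emph{at every intermediate time} in the Duhamel integral $q(t,x)=\int_0^t\!\int\mathcal K(t-s,x-y)f(s,y)\,dy\,ds$. The time integration $\int_0^t(t-s)^{-\beta}ds=C\,t^{1-\beta}$ is exactly what produces the factor $tF_0$ multiplying $t^{-\beta}[M_\eta(|x-y|)]^{-\beta}$ in \eqref{aug1044}. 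The point is that Duhamel inherits the full modulus at each time layer, whereas your comparison with a single homogeneous solution only inherits it once.

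Your ``cleaner route'' --- rerunning the De Giorgi iteration with the forcing present, or equivalently working with the sub/supersolutions $q\mp(t-t_*)F_0$ --- is a viable alternative, and the heuristic that the forcing contributes $O(F_0/M_\eta(r_n))$ at scale $r_n$ is correct. But as written the bookkeeping is incomplete: these errors accumulate through the recursion $A_{n+1}\le b^{-1}A_n+\epsilon_{n+1}$, and one must check that the resulting sum $\sum_k b^{k-n}\epsilon_k$ still produces the stated modulus (it does, using $b<a$, but this needs to be said). If you intend to write this version, that summation is the step to make explicit; otherwise the Duhamel argument is shorter.
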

\begin{proof} This is a simple consequence of the Duhamel formula. Let $\mathcal{K}(t,x)$ be 
the Green's function for the operator $v\cdot\nabla + \mathscr{L} $, so that the
the solution $q(t,x)$ to \eqref{homogmain} with the initial condition $q(0,x)=q_0(x)$ is
\begin{align*}
q(t,x) = \int_{\mathbb{R}^d} \mathcal{K}(t,x-y)q_0(y)dy.
\end{align*}
Then \eqref{Mholder} implies that for $0<t\leq 1$ and $|x_1-x_2|\le r_0$, we have
\begin{align*}
\frac{|q(t,x_1) - q(t,x_2)|t^{\beta}}{[M(|x_1-x_2|)]^{-\beta}} & = \biggl|\int_{\mathbb{R}^d}
\frac{(\mathcal{K}(t,x_1-y) -\mathcal{K}(t,x_2-y))t^{\beta} }{[M(|x_1-x_2|)]^{-\beta}}q_0(y)dy \biggr|\le C\|q_0\|_\infty.
\end{align*}
It follows that the kernel
\[
\tilde{K}(x_1,x_2,y,t) : =\frac{|\mathcal{K}(t,x_1-y) -\mathcal{K}(t,x_2-y)|t^{\beta} }{[M(|x_1-x_2|)]^{-\beta}}
\]
satisfies
\begin{equation}\label{aug1048}
\sup_{|x_1-x_2|\le r_0, 0<t\le 1}\int_{\R^d}\tilde{K}(x_1,x_2,y,t)dy\le C.
\end{equation}
Let now $q(t,x)$ be solution to (\ref{aug1042})  with $q(0,x)=0$. It is given by
the Duhamel formula
\begin{align*}
q(t,x) = 
\int_0^{t} \int_{\mathbb{R}^d} \mathcal{K}(t-s,x-y) f(s,y)dy ds,
\end{align*}
and we can write, using (\ref{aug1048}):
\begin{align*}
\frac{|q(t,x_1) - q(t,x_2)|}{[M(|x_1-x_2|)]^{-\beta}} & \leq  
\int_0^{t} \int_{\mathbb{R}^d}\frac{| \mathcal{K}(t-s,x_1-y) - \mathcal{K}(t-s,x_2-y) | }{[M(|x_1-x_2|)]^{-\beta}} f(s,y)dy ds\\
&\leq 
\|f\|_{L^\infty} \int_0^t \frac{1}{(t-s)^{\beta}} 
\int_{\mathbb{R}^d} \tilde{K}(x_1,x_2,y,t-s) dyds
 \le Ct^{1-\beta}||f||_{L^\infty},
\end{align*}
and (\ref{aug1044}) follows. 
\end{proof}

\subsection{The end of the proof of Theorem~\ref{thm-holder}}

In our case, $\rho(t,x)$ satisfies (\ref{main}), which is of the form (\ref{aug1042}) 
with $k(x,z,t) = \psi(z)\rho(t,x)$, with a uniformly bounded forcing term $G\rho$ in the right side,
due to Proposition~\ref{lem:aug1-02} and (\ref{aug130}).   
As $\rho(t,x)$ obeys the uniform upper and lower bounds in Proposition~\ref{lem:aug1-02},  the bounds~(\ref{aug604}) 
on the kernel~$k(x,z,t)$ hold
with $\eta(z)=\psi(z)$. Assumptions (\ref{aug606}), (\ref{jan206}) are then 
simply (\ref{aug108}), (\ref{aug1006}) respectively, while~(\ref{aug608}) holds due to the monotonicity of 
$rM(r)$ for $0\leq x\leq r_0$, see (\ref{aug1008}) and the comment following it. 

To see that the drift $u(t,x)$ in (\ref{main}) 
satisfies~(\ref{uest0}) and~\eqref{uest}, we first recall the decomposition~(\ref{udecomp}) that allows us to write
\begin{equation}
u=u_1+u_2 + I_0(t),~~\hbox{with }\partial_x u_1=G - \nu,~~\partial_xu_2={\cal L}\rho.
\end{equation} 
The uniform bound (\ref{aug130}) on $G$ implies
that $u_1$ obeys both (\ref{uest0}) and (\ref{uest}).   As for $u_2$, it can be written as
\begin{equation}\label{aug618}
u_2(t,x)={\cal L}\Phi=\int\psi(z)[\Phi(t,x)-\Phi(t,x+z)]dz,
\end{equation}
where $\Phi(t,x)$ is the mean-zero primitive of $\rho(t,x)$, as in (\ref{nov2916}).
Since $\Phi$ is Lipschitz because of the uniform bound on $\rho$, the $L^\infty$-bound
on $u_2$ follows from (\ref{aug1002}), 
and (\ref{uest0}) holds.  To verify~(\ref{uest}), we note that for any
$r>0$ we can write
\begin{equation}\label{aug620}
\begin{aligned}
u_2(t,x)-u_2(t,y)&=\int\psi(z)[\Phi(t,x)-\Phi(t,y)-
\Phi(t,x+z)+\Phi(t,y+z)]dz \\
&=\int_{|z|\le r}+\int_{|z|\ge r}=I_1(r)+I_2(r).
\end{aligned}
\end{equation}
These terms can be bounded as
\begin{equation}\label{aug622}
|I_2(r)|\le C|x-y|\int_{|z|\ge r}\psi(z)dz=C|x-y|M(r).
\end{equation}
As for $I_1$, we use assumption (\ref{aug1006}), that implies
\[
r\psi(r)\le \gamma_0M(r),~~\gamma_0:=\farc{r_0\psi(r_0)}{M(r_0)},~~\hbox{ for all $0<r<r_0$},
\]
so that, as long as $r\in(0,r_0)$, we have, taking $\gamma\in(0,1/2]$ as in (\ref{aug1008}), so that
$z^\gamma M(z)$ is an increasing function for $z\in(0,r_0)$:
\begin{equation}\label{aug624}
\begin{aligned}
|I_1(r)| &\le C\int_{|z|\le r}|z|\psi(z)dz \leq  C\gamma_0 \int_{|z|\le r} M(z) dz
\le C\gamma_0 \int_{|z|\le r} \farc{z^{\gamma}M(z)}{|z|^{\gamma}} dz \\
&\leq C\gamma_0r^{\gamma} M(r)  \int_{|z|\le r} \farc{1}{|z|^{\gamma}} dz = CrM(r).
\end{aligned}
\end{equation}
Setting $r =|x-y|$ implies that (\ref{uest}) holds.

The forcing term $G\rho$ in \eqref{main}  is uniformly bounded 
since
\[
\|G\rho\|_{L^\infty} \le C\|\rho\|_{L^\infty}^2,
\]
by (\ref{aug130}), thus Lemma~\ref{lem-jan202} finishes the proof.~$\Box$

\section{Global existence of smooth solutions}

In this section, we prove Theorem~\ref{thm-main}. 
We follow the strategy of \cite{shvydkoy2017eulerian3}  to show a uniform bound on $\rho_x$.
\begin{proposition}\label{prop-aug1304}
For each $T>0$ there exists $C_T>0$ so that for all~$0\le t\le T$
we have~$\|\rho_x(t,\cdot)\|_{L^\infty}\le C_T$.
\end{proposition}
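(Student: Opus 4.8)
The plan is to follow the Burgers-type argument of \cite{shvydkoy2017eulerian3}: differentiate the density equation~(\ref{main}) in $x$, evaluate at a point where $|\rho_x|$ is maximal, and close a differential inequality for $N(t):=\|\rho_x(t,\cdot)\|_{L^\infty}$. Since $\mathcal L$ commutes with $\d_x$ and $\d_x u=G+\mathcal L\rho$ by~(\ref{fullSys3}), differentiating~(\ref{main}) gives
\[
\d_t\rho_x + u\,\d_x\rho_x + 2\rho_x\,\mathcal L\rho + \rho\,\mathcal L\rho_x = -G_x\rho - 2G\rho_x .
\]
Fix $T>0$; by local existence $N$ is bounded on a short initial interval $[0,t_0]$, so it suffices to bound it on $[t_0,T]$. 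Using the symmetry $\rho_x\leftrightarrow-\rho_x$, assume $N(t)=\rho_x(t,\bar x)$ at a maximum point $\bar x$, where $\rho_{xx}(\bar x)=0$, so the transport term drops; with $\|\rho\|_{L^\infty}\le C_0$, $|G|\le C$ from~(\ref{aug130}) and $|G_x|\le C(1+|\rho_x|)$ from~(\ref{aug132}) we get, at a.e.\ $t$,
\[
\frac{dN}{dt}\le -2N\,\mathcal L\rho(\bar x) - \rho(\bar x)\,\mathcal L\rho_x(\bar x) + C(1+N).
\]

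The dissipative term is bounded below via the nonlocal maximum principle. Since $\rho_x$ has zero mean on $\T$ with mean-zero primitive $\theta=\rho-\kappa$ satisfying $\|\theta\|_{L^\infty}\le 2C_0$, Lemma~\ref{lem:aug14} applied to $\rho_x$ gives either $N\le 2\tilde cC_0$ (and then we are done) or $\mathcal L\rho_x(\bar x)\ge cN\,M(C/N)$; together with $\rho(\bar x)\ge c_0$ and the doubling of $M$ (Lemma~\ref{claimdoub}) the second alternative yields $-\rho(\bar x)\mathcal L\rho_x(\bar x)\le -c_1N\mathcal E$ with $\mathcal E:=M(1/N)\to\infty$ as $N\to\infty$.

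The crux is to show the ``bad'' term $-2N\mathcal L\rho(\bar x)$ is of strictly lower order than $N\mathcal E$: a crude estimate only gives $|\mathcal L\rho(\bar x)|\lesssim M(1/N)$, the same size as the dissipation with no control on the constants, and this is exactly where Theorem~\ref{thm-holder} enters. I would split $\mathcal L\rho(\bar x)$ (bounding the $j\neq0$ periodic images trivially) at radii $r$ and $\varrho_t:=\min(r_0,M^{-1}(c'/t))$, estimating $|\rho(\bar x)-\rho(\bar x+z)|$ by $N|z|$ for $|z|<r$, by the H\"older bound~(\ref{thm-holder-result}) for $r\le|z|\le\varrho_t$, and by $2C_0$ for $|z|>\varrho_t$. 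Using $\int_{|z|<r}|z|\psi(z)\,dz\le CrM(r)$ as in~(\ref{aug624}) and $\int_r^{\varrho_t}[M(s)]^{-\beta}\psi(s)\,ds\le\frac1{1-\beta}[M(r)]^{1-\beta}$, this gives $|\mathcal L\rho(\bar x)|\le C\,NrM(r)+C\,t^{-\beta}[M(r)]^{1-\beta}+C(1+t^{-1})$ for every $r\in(0,\varrho_t)$. Balancing the first two terms with $r=r_*$, $r_*[M(r_*)]^\beta=(Nt^\beta)^{-1}$, and then using the sub-polynomial growth~(\ref{aug1004}) of $M$ to bound $r_*$ from below and the power law $M(\rho^k)\le C_1C_2^k[M(\rho)]^k$ of Lemma~\ref{claimpow} together with doubling, I expect $Nr_*M(r_*)\le C(t_0)\mathcal E^{\gamma_0}$ with $\gamma_0=(1-\beta)/(1-\alpha\beta)<1$ for any $\alpha\in(0,1)$; fixing $\alpha=\tfrac12$ gives $-2N\mathcal L\rho(\bar x)\le C(t_0)N(\mathcal E^{\gamma_0}+1)$.

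Combining these, $\frac{dN}{dt}\le -c_1N\mathcal E + C(t_0)N\mathcal E^{\gamma_0}+C(t_0)(1+N)$, and since $\mathcal E\to\infty$ with $\gamma_0<1$ the first term dominates once $N$ exceeds a threshold $N_*(t_0,T)$, forcing $dN/dt<0$ there; a barrier/Gronwall argument then gives $N\le\max(N(t_0),N_*)$, first on $[t_0,\min(T,1)]$ and then by iterating over unit time intervals (the constants of Theorem~\ref{thm-holder} being controlled by the uniformly bounded $\|\rho\|_{L^\infty}$, $\|G\|_{L^\infty}$, $\|\Phi\|_{L^\infty}$, $\|F\|_{L^\infty}$), so $N$ is bounded on $[0,T]$. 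The hard part is precisely the estimate of $\mathcal L\rho(\bar x)$: without the H\"older regularity of Theorem~\ref{thm-holder} the bad term is only comparable to the dissipation, so the mildly singular regime genuinely needs first the weak ($M$-modulus) H\"older bound and then Lemma~\ref{claimpow} to convert it into a strict gain of exponent; the blow-up of $t^{-\beta}$ near $t=0$ and the restriction $t\le1$ in Theorem~\ref{thm-holder} are minor, absorbed by peeling off $[0,t_0]$.
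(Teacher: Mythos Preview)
Your plan is sound and would close, but it takes a somewhat different route from the paper's proof. Both arguments differentiate~(\ref{main}), evaluate at a point where $|\rho_x|$ is maximal, and must show the ``bad'' term coming from $\mathcal L\rho$ is strictly smaller than the dissipation coming from $\mathcal L\rho_x$; both use the $M$-H\"older bound of Theorem~\ref{thm-holder} on the outer part of $\mathcal L\rho$ and Lemma~\ref{claimpow} to convert powers of $M$ at different scales. The difference is in the near field of $\mathcal L\rho$ and in how the dissipation is quantified. You bound the near field crudely by $N\int_{|z|<r}|z|\psi(z)\,dz\le CNrM(r)$ and the dissipation directly via Lemma~\ref{lem:aug14} applied to $\rho_x$, then balance $NrM(r)$ against $[M(r)]^{1-\beta}$ to produce the exponent $\gamma_0=(1-\beta)/(1-\alpha\beta)<1$ (your computation here is correct: from $r_*[M(r_*)]^\beta\sim 1/N$ and $M(s)\le c_\alpha s^{-\alpha}$ one gets $r_*\gtrsim N^{-1/(1-\alpha\beta)}$, and then Lemma~\ref{claimpow} gives $M(r_*)\lesssim\mathcal E^{1/(1-\alpha\beta)}$). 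The paper instead works with $|\rho_x|^2$ and the Constantin--Vicol quantity $D\rho_x(x)=\int|\rho_x(x)-\rho_x(x+z)|^2\psi(z)\,dz$: Lemmas~\ref{lemlowL}--\ref{lemlowD} give $\rho_x\mathcal L\rho_x\ge\tfrac12 D\rho_x\ge\tfrac12|\rho_x|^2M(C/|\rho_x|)$, while the inner part of $\mathcal L\rho$ is handled by integrating by parts (writing $\psi=-\tilde M'$), then Cauchy--Schwarz, which produces a factor $\sqrt{D\rho_x}\cdot r^{1-\alpha/2}$ that is absorbed back into the dissipation by Young's inequality. Your approach is a bit more elementary and avoids introducing $D\rho_x$; the paper's route has the structural advantage that the same quadratic form $D\rho_x$ appears on both sides and the absorption is by a single Young step, which is also what makes the bootstrap to higher derivatives in the rest of Section~4 transparent (the analogue of $II$ at order $n+1$ again pulls out $\sqrt{D\rho^{(n+1)}}$). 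The peeling off of a short initial interval to defuse the $t^{-\beta}$ factor, and the restriction to $t\le 1$ handled by iterating on unit intervals, are treated the same way in both arguments.
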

The global existence of smooth solutions in Theroem~\ref{thm-main} will then follow by a bootstrap argument.
We begin the proof of Proposition~\ref{prop-aug1304}
with a nonlinear maximum principle for the operator~$\mathcal{L}$.  
\begin{lemma}\label{lemlowL}
Let $f \in C_b^1(\mathbb{R})$,  and $x_0$ be the maximal point where $|f'(x_0)| = \max_x|f'(x)|$,
then we have a lower bound  
\begin{equation}
f' (x_0)\mathcal{L}f' (x_0) \geq \frac{1}{2}Df'(x_0),
\end{equation}
where 
\begin{equation}
Df' (x) := \int_{\mathbb{R}} |f'(x)-f'(x+z)|^2 \psi(z)dz.
\end{equation}
\end{lemma}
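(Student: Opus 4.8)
The plan is to reduce the statement to an elementary pointwise inequality inside the integrand defining $\mathcal{L}f'(x_0)$. Write $g := f'$ and $A := \max_x|g(x)| = |g(x_0)|$. Replacing $f$ by $-f$ changes neither $f'(x_0)\mathcal{L}f'(x_0)$ nor $Df'(x_0)$, so I may assume $g(x_0) = A \ge 0$; if $A = 0$ then $g\equiv 0$ and both sides vanish, so assume $A>0$. (In the application $f=\rho$ is smooth and periodic, so $g=\rho_x$ is continuous and periodic and the maximising point $x_0$ exists; I will note this.) With this normalisation,
\[
f'(x_0)\,\mathcal{L}f'(x_0) = A\int_{\mathbb{R}}\big(A - g(x_0+z)\big)\psi(z)\,dz .
\]

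The heart of the argument is the pointwise bound: for every $z\in\mathbb{R}$,
\[
A\big(A - g(x_0+z)\big) \ \ge\ \tfrac12\big(A - g(x_0+z)\big)^2 .
\]
Indeed, set $a := A - g(x_0+z)$; since $x_0$ maximises $|g|$ we have $-A \le g(x_0+z) \le A$, hence $a\in[0,2A]$, so $A \ge a/2$, and multiplying by $a\ge0$ gives the claim. Equivalently, $a\big(A - a/2\big) = \tfrac12\, a\,\big(A + g(x_0+z)\big)\ge 0$ because $g(x_0+z)\ge -A$. This is precisely the step where the hypothesis that $x_0$ maximises $|f'|$, and not merely $f'$, is used — it is the only genuine idea in the proof, so I do not expect any real obstacle elsewhere.

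Since $\psi\ge 0$, integrating the pointwise inequality against $\psi(z)\,dz$ gives
\[
f'(x_0)\,\mathcal{L}f'(x_0)\ \ge\ \tfrac12\int_{\mathbb{R}}\big(A - g(x_0+z)\big)^2\psi(z)\,dz\ =\ \tfrac12\int_{\mathbb{R}}\big|f'(x_0)-f'(x_0+z)\big|^2\psi(z)\,dz\ =\ \tfrac12\,Df'(x_0),
\]
which is the asserted inequality. The only point needing (mild) care is the convergence of the integrals near $z=0$: for $\mathcal{L}g(x_0)$ one uses the evenness of $\psi$ to symmetrise the integrand, and then the $C^1$ regularity of $g$ (in fact $C^\infty$ in the application) together with the bound $\psi(z)\le c_\alpha|z|^{-1-\alpha}$ for $\alpha$ chosen small makes the integral absolutely convergent; $Df'(x_0)$ is finite for the same reason, so the displayed inequality is between finite quantities and the argument is complete.
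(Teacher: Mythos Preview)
Your proof is correct and is essentially the same as the paper's. The paper writes the identity $f'(x_0)\big(f'(x_0)-f'(x_0+z)\big)=\tfrac12\big(f'(x_0)^2-f'(x_0+z)^2\big)+\tfrac12\big(f'(x_0)-f'(x_0+z)\big)^2$ and drops the first (nonnegative) term, which is exactly your pointwise inequality $A a \ge \tfrac12 a^2$ rewritten as $Aa-\tfrac12 a^2=\tfrac12(A-g)(A+g)\ge 0$.
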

\begin{proof} We have
\begin{equation*}
\begin{aligned}
f' (x_0)\mathcal{L}f' (x_0)  &= \int_{\mathbb{R}}(f'(x_0)^2 - f'(x_0)f'(x_0+z))\psi(z)dz\\
& = \frac{1}{2}\int_{\mathbb{R}} (f'(x_0)^2 - f'(x_0+z)^2)\psi(z)dz 
+ \frac{1}{2}\int_{\mathbb{R}} (f'(x_0) -f'(x_0+z))^2\psi(z) dz \\
&\geq  \frac{1}{2}Df'(x_0).
\end{aligned}
\end{equation*}
\end{proof}
The next step is a lower bound for $Df'$.
\begin{lemma}\label{lemlowD}
There exists $C>0$ that depends only on the kernel $\psi$
so that for all  $f \in C_b^1(\mathbb{R})$, we have a pointwise lower bound
\begin{equation}\label{aug1332}
Df' (x) \geq   |f'(x)|^2M\Big(\farc{C\|f\|_{L^\infty}}{|f'(x)|}\Big),~~\hbox{ for all $x\in\R$.}
\end{equation}
\end{lemma}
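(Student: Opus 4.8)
The plan is to prove the pointwise lower bound $Df'(x) \ge |f'(x)|^2 M(C\|f\|_{L^\infty}/|f'(x)|)$ by truncating the integral defining $Df'$ to an annulus of moderate radii and estimating $f'(x+z)$ there using that $f$ is Lipschitz with constant controlled by $\|f\|_{L^\infty}$ on any fixed scale, in the spirit of the Constantin--Vicol nonlinear maximum principle already used for Lemma~\ref{lem:aug14}. Fix $x$ and write $q = |f'(x)|$; we may assume $q>0$. For $|z|$ small enough — say $|z|\le R$ with $R$ to be chosen — the difference $f'(x)-f'(x+z)$ is close to $f'(x)$ unless $f'(x+z)$ is itself comparable to $f'(x)$. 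The key is the elementary observation that one cannot have $|f'(x+z)|\ge q/2$ on a long interval around $x$ without $f$ growing beyond $\|f\|_{L^\infty}$: indeed, if $f'$ were of one sign and of size $\ge q/2$ on an interval of length $\ell$, then the total variation of $f$ over that interval would be at least $q\ell/2$, forcing $2\|f\|_{L^\infty} \ge q\ell/2$, i.e. $\ell \le 4\|f\|_{L^\infty}/q$.

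Concretely, first I would reduce to the region where $f'(x+z)$ is small. Consider the set $E = \{z : |z|\le R,\ |f'(x+z) - f'(x)| < q/2\}$. On $E$ one has $|f'(x+z)| > q/2$ and, moreover, $f'(x+z)$ has the same sign as $f'(x)$; hence $E$ is contained in the set where $f'$ keeps that sign with magnitude $> q/2$, and by the variation argument above this set has measure at most $8\|f\|_{L^\infty}/q$ (a factor $2$ for the two sides of $x$). Therefore, choosing $R$ so that $R - 8\|f\|_{L^\infty}/q$ is still a definite fraction of $R$ — e.g. take $R = C_0\|f\|_{L^\infty}/q$ with $C_0$ large — the complementary set $\{z : |z|\le R\} \setminus E$, on which $|f'(x)-f'(x+z)| \ge q/2$, has measure comparable to $R$. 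Then
\[
Df'(x) \ge \int_{\{|z|\le R\}\setminus E} |f'(x)-f'(x+z)|^2 \psi(z)\,dz \ge \frac{q^2}{4}\int_{\{|z|\le R\}\setminus E}\psi(z)\,dz.
\]

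It remains to bound $\int_{\{|z|\le R\}\setminus E}\psi(z)\,dz$ from below by a constant times $M(R)$. Since $\psi$ is even and decreasing, mass near the origin dominates, so one must be slightly careful that $E$ could a priori sit near $0$; but $E$ has measure at most $8\|f\|_{L^\infty}/q = (8/C_0) R$, so by taking $C_0$ large we can guarantee (comparing with the total mass $\int_{|z|\le R}\psi$, using the doubling condition (\ref{doubling}) to compare $\psi$ on $\{|z|\le R/2\}$ with $\{R/2\le|z|\le R\}$) that at least half of $\int_{|z|\le R}\psi(z)\,dz = 2(M(R) - M(\infty))$ — effectively $\gtrsim M(R)$ once $R$ is small, since $M(R)\to\infty$ — survives after removing $E$. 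This gives $Df'(x) \ge c\, q^2 M(R) = c\, q^2 M(C_0\|f\|_{L^\infty}/q)$, which is the claim after renaming constants and using the doubling of $M$ (Lemma~\ref{claimdoub}) to absorb the factor $C_0$ into the argument of $M$. The main obstacle is the bookkeeping in the second paragraph: carefully arranging that the ``bad'' set $E$ — where $f'$ has not yet dropped below $q/2$ — cannot eat up the mass of $\psi$ concentrated near $z=0$, which is exactly where the doubling condition on $\psi$ (and hence on $M$) is used to trade a constant factor in the radius for a constant factor in $M$.
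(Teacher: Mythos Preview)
Your argument has two genuine gaps.

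\textbf{The measure bound on $E$ is false.} Your FTC argument shows that any \emph{interval} on which $f'$ keeps one sign with $|f'|\ge q/2$ has length $\le 4\|f\|_{L^\infty}/q$; hence the connected component of $E$ containing $0$ is short. But $E$ need not be connected. Take $f(y)=(q/\omega)\sin(\omega y)$ with $\omega$ large: then $\|f\|_{L^\infty}=q/\omega$, $f'(0)=q$, and $E=\{|z|\le R:\cos(\omega z)>1/2\}$ has measure $\approx \tfrac{2}{3}(2R)$. With $R=C_0\|f\|_{L^\infty}/q=C_0/\omega$ and $C_0$ large, this far exceeds $8/\omega$. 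The positive contribution of $f'$ on $E$ is cancelled by the negative contribution on $[0,R]\setminus E$, which is exactly why $f$ stays bounded despite $|E|$ being large; so $\|f\|_{L^\infty}$ does not control $|E|$, only $|E_0|$.

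\textbf{The mass--comparison step is ill-posed.} You write $\int_{|z|\le R}\psi(z)\,dz = 2(M(R)-M(\infty))$, but in fact $\int_{|z|\le R}\psi = +\infty$ since $\psi$ is non-integrable at $0$; the quantity $2M(R)$ is $\int_{|z|\ge R}\psi$. So ``at least half of $\int_{|z|\le R}\psi$ survives'' is meaningless. Even granting a measure bound $|E|\le\delta R$, since $\psi$ is decreasing the worst case places $E$ around $0$, leaving $\int_{\{|z|\le R\}\setminus E}\psi \ge 2\int_{\delta R/2}^{R}\psi = 2\bigl(M(\delta R/2)-M(R)\bigr)$; for $M(r)\sim\log(1/r)$ this is $O(1)$, not $\gtrsim M(R)$. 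Doubling of $\psi$ does not help here: it compares $\psi(r)$ with $\psi(2r)$, not a finite integral with an infinite one.

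The paper sidesteps both issues by working on the \emph{exterior} $\{|z|\ge R\}$, exactly as in Lemma~\ref{lem:aug14}: with a cutoff $\chi$ supported in $\{|z|\ge R/2\}$, expand
\[
Df'(x)\ \ge\ |f'(x)|^2\!\int\psi(z)\chi(z/R)\,dz\ -\ 2f'(x)\!\int f'(x+z)\,\psi(z)\chi(z/R)\,dz,
\]
integrate the second term by parts to replace $f'$ by $f$, and control $\partial_z(\psi\chi(\cdot/R))$ via the H\"ormander--Mikhlin condition~(\ref{aug106}) and the doubling of $M$. This yields $Df'(x)\ge 2|f'(x)|^2 M(R) - C|f'(x)|\,\|f\|_{L^\infty}M(R)/R$, and the choice $R=C\|f\|_{L^\infty}/|f'(x)|$ finishes. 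If you want to salvage your geometric idea, you would need to control not the measure of $E$ but the length of its connected component at $0$, and then argue on an exterior annulus --- at which point you are essentially back to the paper's cutoff argument.
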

\begin{proof} As in the proof of Lemma~\ref{lem:aug14},  let $\chi(x)$ be a radially non-decreasing 
smooth cut-off function  
such that $\chi(x) = 0$ for $|x|\leq 1/2$ and $\chi(x) =1$ for $|x|\geq 1$. We write, using (\ref{aug1330}):
\begin{align*}
Df'(x)& \geq \int_{\mathbb{R}} |f'(x)-f'(x+z)|^2 \psi(z)\chi(z/R) dz\\
& \geq |f'(x)|^2 \int_{\mathbb{R}}\psi(z)\chi(z/R) dz - 2f'(x) \int_{\mathbb{R}} f'(x+z)\psi(z)\chi(z/R) dz\\
& \geq   |f'(x)|^2 \int_{|z|\geq R}\psi(z)dz +2f'(x) \int_{|z|\geq R/2} f(x+z) \d_z\bigl(\psi(z)\chi(z/R) \bigr)dz \\
&\geq 2|f'(x)|^2 M(R) - C|f'(x)| \frac{ \|f\|_{L^\infty}}{R} \int_{|z|\geq R/2}\psi(z)dz\\
& \geq 2|f'(x)|^2 M(R) - C|f'(x)| \|f\|_{L^\infty} \frac{M(R/2)}{R}.
\end{align*}
The doubling condition (\ref{aug1010}) for $M$ gives
\begin{align*}
Df'(x) \geq 2|f'(x)|^2 M(R) - C|f'(x)| \|f\|_{L^\infty} \frac{M(R)}{R}.
\end{align*}
Setting 
\[
R= \frac{C \|f\|_{L^\infty}}{|f'(x)|},
\]
we conclude that 
\begin{align*}
Df'(x) \geq |f'(x)|^2 M\Big(\farc{C\|f\|_{L^\infty}}{|f'(x)|}\Big),
\end{align*}
which is (\ref{aug1332}). 
\end{proof}


\subsubsection*{The proof of Proposition~\ref{prop-aug1304}}

We first note that there exists a time $\tau_0$ that depends on the $\|\rho_0\|_{Lip}$
and $\|u_0\|_{Lip}$ such that for all $0\le t\le \tau_0$ and all $x,y\in\T$ we have
\begin{equation}\label{aug1920}
|\rho(t,x)-\rho(t,y)|\le \farc{|x-y|}{2}\|\rho_0\|_{Lip}.
\end{equation}
Putting this estimate together with Theorem~\ref{thm-holder}, we conclude that
there exists a constant~$C>0$ that depends on $\|\rho_0\|_\infty$ and $\tau_0$ so that
we have
\begin{align}\label{aug1922}
|\rho(t,x) - \rho(t,y)|\le C [M(|x-y|)]^{-\beta},~~\hbox{for all $0\le t\le T$ and 
$x,y\in \T$.}
\end{align}
We take the derivative of  equation \eqref{main} and use (\ref{fullSys3}): 
\begin{equation}\label{aug1702}
\d_t \rho_x + (G+\mathcal{L}\rho)\rho_x + u \rho_{xx} + \rho_x G+\rho G_x = - \rho_x \mathcal{L}\rho - \rho \mathcal{L}\rho_x.
\end{equation}
Multiplying (\ref{aug1702}) by $\rho_x$ and evaluating at the maximal point $x_+$ of $\rho_x$, so that $\rho_{xx}(x_+) = 0$, we obtain
\begin{align}\label{aug1704}
\frac{1}{2}\d_t |\rho_x(x_+)|^2 =& -(G_x(x_+)\rho(x_+)\rho_x(x_+)+2G(x_+)|\rho_x(x_+)|^2) \\
\nonumber &- 2|\rho_x(x_+)|^2\mathcal{L}\rho(x_+) - \rho(x_+) \rho_x(x_+)\mathcal{L}\rho_x(x_+) = I+II+III.
\end{align}
By the uniform estimates (\ref{aug130}) on $G$ and (\ref{aug132}) on $G_x$, we
can bound the first term in the right side as
\begin{align*}
I\leq |G_x(x_+)\rho(x_+)\rho_x(x_+)+2G(x_+)|\rho_x(x_+)|^2| \leq C(1+ |\rho_x(x_+)|^2).
\end{align*}
Lemma \ref{lemlowL} together with a uniform lower bound on $\rho(t,x)$ in Proposition~\ref{lem:aug1-02} 
gives a bound for the dissipative term $III$
in the right side of (\ref{aug1704}):
\begin{align}\label{aug1706}
III \leq -\frac{1}{2}c_0 D\rho_x(x_+) .
\end{align}
To estimate there term $II$ in (\ref{aug1704}), we need to bound $\mathcal{L}\rho(x_+)$. 
We introduce a smooth symmetric cut-off function $\phi(x)$ such that $\phi(x) = 1$ for $|x|\leq 1$ and $\phi(x) = 0$ for $|x|\geq 2$, and write,
for any $x$ and $0<r<1/2$:
\begin{equation}\label{aug1708}
\begin{aligned} 
\mathcal{L}\rho (x) &= \int_{\mathbb{R}} \phi(\farc{z}{r}) (\rho(x)-\rho(x+z))\psi(z) dz +  
\int_{\mathbb{R}}(1- \phi(\farc{z}r)) (\rho(x)-\rho(x+z))\psi(z) dz\\
&=II_1+II_2.
\end{aligned}
\end{equation}
The second term above can be estimated using the $M$-H\"{o}lder estimate 
(\ref{aug1922}) for $\rho$: 
\begin{equation}
\begin{aligned}
&II_2\leq   
C\int_{|z|\geq r} M(|z|)^{-\beta}\psi(z)dz\le C{ M(r)^{1-\beta}}. 
\end{aligned}
\end{equation}
The estimate for  $II_1$ is more subtle.
Let  $\tilde{M}$, an odd extension of $M$, be the primitive of the even function~$(-\psi(z))$, then integration by parts gives
\begin{equation}\label{aug1712}
\begin{aligned}
II_1 &= \int_{|z|\leq 2r} \phi(\farc{z}r) (\rho(x)-\rho(x+z))(-\d_z\tilde{M}(z))dz\\
&\leq \frac{C}{r}\int_{r\leq |z|\leq 2r}  |\rho(x)-\rho(x+z)|\tilde{M}(z) dz - \int_{|z|\leq 2r}  \phi(\farc{z}r) \rho_z(x+z) \tilde{M}(z)dz
= II_{11} + II_{12}.
\end{aligned}
\end{equation}
Again by the $M$-H\"{o}lder estimate for $\rho$, we have, since $M(r)$ is decreasing
\begin{equation}\label{aug1714}
II_{11}\leq \frac{C}{r}
\int_{r\leq |z|\leq 2r}  |\rho(x)-\rho(x+z)|M(|z|) dz 
\leq \frac{C}{r}
\int_{r\leq |z|\leq 2r} M(|z|)^{1-\beta} dz \leq 
{C M(r)^{1-\beta}}. 
\end{equation}
Moreover, since $\psi(z)$ is even and $\tilde{M}$ is odd, we can write
\begin{equation}\label{aug1716}
\begin{aligned}
II_{12} &= \int_{|z|\leq 2r}  \phi(\farc{z}r)(\rho_x(x)-\rho_x(x+z)) \tilde{M}(z)dz\\
 &\leq C\int_{|z|\leq 2r} |\rho_x(x)-\rho_x(x+z)|\sqrt{\psi(z)} M(|z|)/\sqrt{\psi(z) } dz\\
 &\leq C\biggl( \int_{\mathbb{R}} |\rho_x(x)-\rho_x(x+z)|^2\psi(z)dz\biggr)^{1/2}\biggl(  \int_{|z|\leq 2r}\frac{M(|z|)^2 }{\psi(z)}dz\biggr)^{1/2}\\
 &=C \sqrt{D\rho_x(x)}\biggl(  \int_{|z|\leq 2r}\frac{M(|z|)^2 }{\psi(z)}dz\biggr)^{1/2}. 
\end{aligned}
\end{equation}
As a consequence of the lower bound on $\psi(r)$ in (\ref{aug1002}), and assumption (\ref{aug1008}) which implies
that~$x^{1/2} M(x)$ is non-decreasing, 
the integral in the right side of (\ref{aug1716}) can be bounded as
\begin{align*}
\int_{|z|\le 2r}  \frac{M(|z|)^2 }{\psi(z)}dz& \leq C \int_{|z|\le 2r} M(|z|)^2 |z|^{1-\alpha/2}dz \\
&\leq C r M(2r)^2  \int_{|z|\le 2r} |z|^{-\alpha/2} dz = C r^{2-\alpha/2} M(2r)^2 \leq C r^{2-\alpha}, 
\end{align*} 
as it follows from (\ref{aug1004}) that
\[
M(2r) \leq \farc{C}{(2r)^{\alpha/4}}.
\]
We conclude that
\begin{equation}\label{aug1720}
II_{12}\le C \sqrt{D\rho_x(x)} r^{1-\alpha/2}.
\end{equation}
Going back to $II$, we have shown that
\begin{equation}\label{aug1722}
\begin{aligned}
II &\leq C 
|\rho_x(x_+)|^2 M(r)^{1-\beta} + 
C|\rho_x(x_+)|^2 \sqrt{D\rho_x(x_+)} r^{1-\alpha/2} \\
&\leq  
C|\rho_x(x_+)|^2 M(r)^{1-\beta} + 
\frac{1}{4} c_0 D\rho_x(x_+) + C r^{2-\alpha} |\rho_x(x_+)|^4.
\end{aligned}
\end{equation}
Collecting our bounds for the three terms in the right side of (\ref{aug1704}), 
and using Lemma \ref{lemlowD}, we obtain
\begin{equation}\label{aug1902}
\begin{aligned} 
&\frac{1}{2}\d_t |\rho_x(x_+)|^2 
\le C(1+ |\rho_x(x_+)|^2)+
C|\rho_x(x_+)|^2 M(r)^{1-\beta} 
+ C r^{2-\alpha} |\rho_x(x_+)|^4- \frac{1}{4} c_0 D\rho_x(x_+)\\
&\le C(1+ |\rho_x(x_+)|^2)+
C|\rho_x(x_+)|^2 M(r)^{1-\beta} 
+ C r^{2-\alpha} |\rho_x(x_+)|^4- \frac{1}{4} c_0    
|\rho_x(x_+)|^2M\Big(\farc{C\|\rho\|_{L^\infty}}{|\rho_x(x_+)|}\Big).
\end{aligned}
\end{equation}
Let us choose 
\begin{equation}\label{aug1904}
r = \min(\farc{1}{2},r_c^{2/(2-\alpha)}),~~r_c:=
\frac{C \|\rho\|_{L^\infty}}{|\rho_x(x_+)| }.
\end{equation}
It follows from 
Lemma \ref{claimpow} that
\begin{align}\label{condr}
M(r)\leq C [1+M(r_c)]^{2/(2-\alpha)}.
\end{align}
Using this estimate in (\ref{aug1902}), as well as the definition (\ref{aug1904}) of $r$,
we obtain
\begin{align}\label{aug1908}
\frac{1}{2}\d_t |\rho_x(x_+)|^2 \leq & C(1+|\rho_x(x_+)|^2) + 
C|\rho_x(x_+)|^2 [1+M(r_c)]
^{2(1-\beta)/(2-\alpha)}
- C |\rho_x(x_+)|^2 M(r_c). 
\end{align}
Let us set $z(t)=|\rho_x(t,x_+(t))|^2$, then (\ref{aug1908}) is
\begin{align}\label{aug1910}
\farc{dz}{dt} \leq & C(1+z) +
Cz\Big[M\Big(\farc{C}{\sqrt{z}}\Big)\Big]^{2(1-\beta)/(2-\alpha)}
- C zM\Big(\farc{C}{\sqrt{z}}\Big). 
\end{align}
We choose $\alpha <2\beta$ so that $2(1-\beta)/(2-\alpha) < 1$, 
and then apply the Young's inequality, to obtain 
\begin{align}\label{aug1911}
\farc{dz}{dt} \leq & C(1+z) 
- \farc{C z}{2}M\Big(\farc{C}{\sqrt{z}}\Big). 
\end{align}
As $M(z)\to+\infty$ as $z\to 0$, the maximum principle implies that $z(t)$ remains finite
at all times, and the proof of Proposition~\ref{prop-aug1304} is complete.

\subsubsection*{The proof of Theorem \ref{thm-main} by bootstrapping }

Proposition~\ref{prop-aug1304} tells that $|\d_x \rho(t,x)|$ stays bounded, hence so does $|\d_x G(t,x)|$ by (\ref{dec510}). Therefore, 
$|\d_x u (t,x)|$ stays bounded because
\begin{equation}\label{dec523}
\begin{aligned}
|\d_x u (t,x)| &\le | G(t,x)| + |\mathcal{L}\rho(t,x)|\\
&\le C + \int_{|z|\le 1} |\rho(t,x)-\rho(t,x+z)| \psi(z) dz + \int_{|z|\ge 1} |\rho(t,x)-\rho(t,x+z)| \psi(z) dz\\
&\le C + \|\d_x \rho(t,\cdot)\|_{L^{\infty}} \int_{|z|\le 1}|z| \psi(z) dz  + 2\|\rho(t,\cdot)\|_{L^{\infty}} M(1)\le C,
\end{aligned}
\end{equation}
with $|z|\psi(z)$ being locally integrable by (\ref{aug1002}). Now we differentiate (\ref{main}) in $x$ and rearrange it to be
\begin{equation}\label{bt1}
\d_t(\rho_x)+ u\d_x( \rho_x)+ \rho \mathcal{L}( \rho_x) = -G_x \rho - G\rho_x - \rho_x\mathcal{L}\rho - u_x \rho_x.
\end{equation}
Note that the right side of (\ref{bt1}) is a bounded forcing term, thus (\ref{bt1}), viewed as an equation for $\rho_x$, 
lies in the class of linear integro-differential equations (\ref{fullholder}) and $\rho_x$ satisfies the $M$-H\"{o}lder estimate (\ref{aug1922}). Now let us repeat the proof of Proposition \ref{prop-aug1304}. We take the derivative of \eqref{aug1702} and use (\ref{fullSys3}) to get
\begin{equation}\label{dec600}
\begin{aligned}
\d_t \rho_{xx} +u\rho_{xxx}=& -\big(G_{xx}\rho + 3G\rho_{xx} + 3\rho_{xx}\mathcal{L}\rho +3G_x\rho_x\big)\\
&-3\rho_x\mathcal{L}\rho_x - \rho \mathcal{L}\rho_{xx}.
\end{aligned}
\end{equation}
Multiplying (\ref{dec600}) by $\rho_{xx}$, at the maximal point $x_+$ of $\rho_{xx}$, so that $\rho_{xxx}(x_+) = 0$, we obtain
\begin{equation}\label{dec601}
\begin{aligned}
\frac{1}{2}\d_t   |\rho_{xx}(x_+)|^2 
=& -\big(G_{xx}(x_+)\rho(x_+)\rho_{xx}(x_+)+(3G(x_+) + 3\mathcal{L}\rho(x_+)) |\rho_{xx}(x_+)|^2  \\
&+	3G_x(x_+)\rho_x(x_+)\rho_{xx}(x_+) \big) \\
&- 3\rho_{xx}(x_+)\rho_x(x_+)\mathcal{L}\rho_x(x_+) - \rho(x_+) \rho_{xx}(x_+)\mathcal{L}\rho_{xx}(x_+) = I'+II'+III'.
\end{aligned}
\end{equation}
We see that (\ref{dec601}) has the same structure as  (\ref{aug1704}). The estimate (\ref{dec510}), and the boundedness of $u_x$ and $\mathcal{L}\rho$ give that
\begin{equation}
I' \le C (1+|\rho_{xx}(x_+)|^2).
\end{equation}
Again Lemma \ref{lemlowL} and a uniform lower bound for $\rho(t,x)$ gives a bound
\begin{align}
III' \le -\frac{1}{2}c_0 D\rho_{xx}(x_+) .
\end{align}
And for $II'$, follow the proof of Proposition \ref{prop-aug1304} and note that $\rho_x(t,x)$ is bounded, to get
\begin{equation}
\begin{aligned}
II' &\le C |\rho_{xx}(x_+)| M(r)^{1-\beta} + 
C|\rho_{xx}(x_+)| \sqrt{D\rho_{xx}(x_+)} r^{1-\alpha/2} \\
&\leq  
C|\rho_{xx}(x_+)| M(r)^{1-\beta} + 
\frac{1}{4} c_0 D\rho_{xx}(x_+) + C r^{2-\alpha} |\rho_{xx}(x_+)|^2.
\end{aligned}
\end{equation}
We can now obtain an inequality similar to (\ref{aug1911}), which implies the that $|\d_x^2 \rho(t,x)|$ stays bounded, and thus $|\d_x^2 u(t,x)|$ stays bounded. The arguments above can be iterated due to two reasons: first, the analog of (\ref{bt1}) for the higher order derivatives, is always in the form of an integro-differential equation 
\begin{align}\label{bt2}
\d_t (\rho^{(n)}) +  u \d_x (\rho^{(n)}) + \rho\mathcal{L}(\rho^{(n)}) = f_n(G^{(k)}, \rho^{(k)}, u^{(k)}, \mathcal{L}
\rho^{(k-1)}), ~~n\in \N, 1\le k\le n,
\end{align}
where $f_n$ is a polynomial function depending on $G^{(k)}, \rho^{(k)}, u^{(k)}, \mathcal{L}
\rho^{(k-1)}, 1\le k\le n$, and thus~$f_n$ is a bounded forcing term. 
Second, when we take $(n+1)$-th derivative of (\ref{main}) and use (\ref{fullSys3}), it can always be put into the form
\begin{equation}\label{bt3}
\begin{aligned}
\d_t (\rho^{(n+1)}) + u\rho^{(n+2)} =& -g(G^{(k)}, \rho^{(k)},\mathcal{L}\rho^{(k-2)} )-C\rho_x\mathcal{L}\rho^{(n)}-\rho\mathcal{L}\rho^{(n+1)}, ~~n\in \N, 2\le k\le n+1,
\end{aligned}
\end{equation}
and the first term $|g|\le C(1+|\rho^{(n+1)}(t,x)|)$. In (\ref{bt3}), we look at the maximal 
point of~$|\rho^{(n+1)}|$ and so on, to get that $|\rho^{(n+1)}(t,x)|$ stays bounded, so is $|u^{(n+1)}(t,x)|$.
Estimates like in (\ref{aug1911}) also imply the local existence of $\rho^{(n)}, u^{(n)}$. By bootstrapping, we conclude that $\rho(t,x),u(t,x)$ stays smooth for all times.~$\qed$

\end{document}